\newcommand{\T}{\mathbb{T}}
\newcommand{\D}{\mathbb{D}}
\newcommand{\Z}{\mathbb{Z}}
\newcommand{\R}{\mathbb{R}}
\newcommand{\C}{\mathbb{C}}
\newcommand{\N}{\mathbb{N}}
\theoremstyle{plain}
\newtheorem{theorem}{Theorem}[section]
\newtheorem{lemma}[theorem]{Lemma}
\newtheorem{corollary}[theorem]{Corollary}
\newtheorem{prop}[theorem]{Proposition}
\theoremstyle{definition}
\newtheorem{definition}[theorem]{Definition}
\newtheorem{example}[theorem]{Example}
\theoremstyle{remark}
\numberwithin{equation}{section}
\DeclareSymbolFont{bbold}{U}{bbold}{m}{n}
\DeclareSymbolFontAlphabet{\mathbbold}{bbold}
\title{Abelian Squares and Their Progenies}
\author{Charles Burnette \and Chung Wong}
\address{Department of Mathematics, Xavier University of Louisiana, New Orleans, Louisiana 70125-1098, USA}
\email{cburnet2@xula.edu}
\address{Department of Mathematics, County College of Morris, Randolph, NJ 07869-2086, USA}
\email{cwong24@ccm.edu}
\subjclass[2010]{05A05, 05A15, 05A16, 42B05.}
\keywords{abelian squares, stable polynomials, spectral density functions, power sum symmetric polynomials, Fourier coefficients, Parikh vectors, ordinary generating functions.}
\date{\today}
\begin{document}

\maketitle

\begin{abstract}
A polynomial $P \in \C[z_1, \ldots, z_d]$ is strongly $\D^d$-stable if $P$ has no zeroes in the closed unit polydisc $\overline{\D}^d.$ For such a polynomial define its spectral density function to be $\mathcal{S}_P(\mathbf{z}) = \left(P(\mathbf{z})\overline{P(1/\overline{\mathbf{z}})}\right)^{-1}.$ An abelian square is a finite string of the form $ww'$ where $w'$ is a rearrangement of $w.$

We examine a polynomial-valued operator whose spectral density function's Fourier coefficients are all generating functions for combinatorial classes of constrained finite strings over a $d$-character alphabet. These classes generalize the notion of an abelian square, and their associated generating functions are the Fourier coefficients of one, and essentially only one, $L^2(\T^d)$-valued operator. Integral representations, divisibility properties, and recurrent and asymptotic behavior of the coefficients of these generating functions are given as consequences. Tools in the derivations of our asymptotic formulas include a version of Laplace's method for sums over lattice point translations due to Greenhill, Janson, and Ruci\'{n}ski, a version of stationary phase method for oscillatory integrals with complex phase due to Pemantle and Wilson, and various polynomial identities related to powers of modified Bessel functions of the first kind due to Moll and Vignat.
\end{abstract}

\section{Introduction}

For $r = 1, 2, \ldots,$ the $r^{\text{th}}$ \textit{power sum symmetric polynomial} in $d$ variables is
\[p_{r, d}(x_1, \ldots, x_d) = \sum_{k = 1}^{d} x_k^r.\]
\noindent Power sum symmetric polynomials fall within the scope of the theory of symmetric functions and are exposited in various textbooks and monographs such as those of Stanley \cite{Stanley} and Macdonald \cite{Macdonald}. Although power sum symmetric polynomials habitually surface in commutative algebra and representation theory, the approach adopted here is purely analytic.

A polynomial $P \in \C[z_1, \ldots, z_d]$ is said to be \textit{stable} with respect to a region $\Omega \subseteq \C^d,$ or simply $\Omega$-\textit{stable}, if $P$ is zero-free in $\Omega,$ and \textit{strongly} $\Omega$-\textit{stable} if $P$ is zero-free in the closure $\overline{\Omega}.$ This paper is particularly concerned with strongly $\D^d$-stable polynomials, where $\D$ is the complex open unit disc. Given a strongly $\D^d$-stable polynomial $P$ define its \textit{spectral density function} to be
\[\mathcal{S}_P(\mathbf{z}) = \left(P(\mathbf{z})\overline{P\!\left(1/\overline{\mathbf{z}}\right)}\right)^{-1}\]
where $\overline{\mathbf{z}} = \left(\overline{z_1}, \ldots, \overline{z_d}\right)$ and $1/\mathbf{z} = (1/z_1, \ldots, 1/z_d).$ Notice that the restriction of $\mathcal{S}_{P}(\mathbf{z})$ to the unit polycircle $\T^d$ is $|P(\mathbf{z})|^{-2}.$ The strong $\D^d$-stability of $P$ thus ensures that $\mathcal{S}_{P}$ is analytic on a neighborhood of $\T^d.$

Among all positive functions on $\T^d,$ the spectral density functions of strongly $\D^d$-stable polynomials warrant special attention. They are pivotal in framing higher-dimensional analogues of the Bernstein-Szeg\H{o} measure moment problem and in identifying the strictly positive trigonometric polynomials that admit spectral Fej\'{e}r-Riesz factorizations, as discussed in \cite{BW} and \cite{GW}. Attempts to incorporate these problems into a general framework for multivariable orthogonal polynomials has culminated in a wealth of profound mathematics during the 20th century with rich applications in signal processing \cite{Gohberg}, systems theory \cite{FF, FFGK}, and wavelets \cite[Ch. 6]{Daubechies} to name a few.

Our objective here is not to dive deeper into the characteristic phenomena of strongly $\D^d$-stable polynomials. Rather, we wondered whether there is anything combinatorially meaningful about spectral density over the rings of symmetric polynomials. Searching for potential insights, we investigated the polynomials $p(z_1, z_2, z_3) = 1 - xp_{r, 3}(z_1, z_2, z_3)$ for $|x| < 1/3$ together with their spectral density functions $\mathcal{S}_{p}.$ Treating each $\mathcal{S}_{p}$ as a function of a real variable $x$ that maps into $L^2(\T^3)$, one can calculate the Taylor series expansions of the Fourier coefficients $\widehat{\mathcal{S}_{p}}(j, k, l).$ To start, we computed the first few Taylor coefficients of
\[\widehat{\mathcal{S}_{p}}(0, 0, 0) = \frac{1}{8\pi^3}\int\limits_{0}^{2\pi}\int\limits_{0}^{2\pi}\int\limits_{0}^{2\pi} \mathcal{S}_{p}(e^{i\theta_1}, e^{i\theta_2}, e^{i\theta_3})\,\mathrm{d}\theta_1\mathrm{d}\theta_2\mathrm{d}\theta_3\]
\noindent and obtained the following sequence of integers:\\

\begin{center}
\begin{tabular}{c c c c c c c c c c c c c c}
$n$ & 0 & 1 & 2 & 3 & 4 & 5 & 6 & 7 & 8 & 9 & 10 & 11 & 12 \\ \midrule
$\dfrac{\partial_x^n\widehat{\mathcal{S}_{p}}\left.(0, 0, 0)\right|_{x = 0}}{n!}$ & 1 & 0 & 3 & 0 & 15 & 0 & 93 & 0 & 639 & 0 & 4653 & 0 & 35\,169
\end{tabular}
\end{center}\

\noindent Upon inputting the nonzero entries into Neil Sloane's \textit{On-Line Encyclopedia of Integer Sequences} \cite{OEIS}, we received a match in sequence A002893. This goaded us into pinpointing a combinatorial interpretation of these Taylor coefficients. We especially homed in on Jeffrey Shallit's comment that entry A002893 is the number of abelian squares of length $2n$ over a 3-letter alphabet.

Given a nonempty, finite set $\Sigma$ of characters, an \textit{abelian square} over $\Sigma$ is a string in the free monoid $\Sigma^*$ of the form $ww'$ where $w \in \Sigma^*$ and $w'$ is an anagram of $w.$ Six examples of English abelian squares are \texttt{noon}, \texttt{tartar}, \texttt{intestines}, \texttt{reappear}, \texttt{mesosome}, and \texttt{aa}, the last one being both the shortest and alphabetically first abelian square in the English language. The concept of an abelian square was introduced by Erd\H{o}s in \cite{Erdos}. Richmond and Shallit expound on both the exact and asymptotic enumerations of abelian squares in \cite{RS}.

Although this veers in spirit from more traditional investigations of strongly $\D^d$-stable polynomials, focusing on the connection between the Taylor coefficients of $\widehat{\mathcal{S}_{p}}(0, 0, 0)$ and the number of abelian squares over a 3-letter alphabet was a fruitful diversion. In fact, every Fourier coefficient $\widehat{\mathcal{S}_{p}}(j, k, l)$ is encoded with combinatorial data. In Chapter 3 of this paper, we exhibit multidimensional generalizations of $p(z_1, z_2, z_3),$ which we shall refer to as stabilized power sum symmetric polynomials, whose spectral density functions are also ordinary generating functions (abbreviated OGF hereafter) for combinatorial classes of constrained finite strings over a $d$-character alphabet. Various arithmetic and enumerative properties belonging to the coefficients of these OGFs, including recurrent and asymptotic behavior, are deduced in Chapters 4 and 5.

\section{Notational Conventions}

Taking a page from measure theory, we present the following decomposition of integers. If $a \in \Z,$ we define $a^+ = \max(a, 0)$ and $a^- = \max(-a, 0).$ Much like other such decompositions, $a = a^+ - a^-$ and $|a| = a^+ + a^-.$ Also, for any two integers $a$ and $b,$ there is exactly one integer $c$ such that $a - c^+ = b - c^-,$ namely $c = a - b.$ This follows easily from the fact that $a - b = (a - b)^+ - (a - b)^-.$

We shall also avail ourselves of the neatness and brevity of multi-index notation. If $\alpha \in \Z^d,$ then we set
\[\alpha^+ = (\alpha_1^+, \ldots, \alpha_d^+), \hspace{2.5em} \alpha^- = (\alpha_1^-, \ldots, \alpha_d^-), \hspace{2.5em} \text{abs}(\alpha) = (|\alpha_1|, \ldots, |\alpha_d|),\]
\noindent and if $\alpha \in \N_0^d$ (where $\N_0$ is the set of nonnegative integers),
\[|\alpha| = \sum_{k = 1}^{d} \alpha_k, \hspace{2.5em} \alpha! = \prod_{k = 1}^{d} \alpha_k!, \hspace{2.5em} \binom{|\alpha|}{\alpha} = \frac{|\alpha|!}{\alpha!}.\]
\noindent It will also be advantageous to have a notion of divisibility for integer vectors. Given an integer $a,$ we say that $a \mid \alpha$ if $a \mid \alpha_k$ for $k = 1, \ldots, d.$ We will use $\|\, {\scriptscriptstyle \stackrel{\bullet}{{}}}\, \|_p$ to symbolize the standard $p$-norm.

Let $[a : b]$ denote the set of integers between $a$ and $b,$ inclusive. Because the precise lettering of an alphabet is immaterial for our purposes, we take $\Sigma_d = [1 : d]$ throughout the rest of this manuscript. The length of a string $w$ is the number of characters in the string and is denoted by $|w|.$ The \textit{Parikh vector} of a string $w \in \Sigma_d^*$ is given by $\rho(w) = \left(\rho_1(w), \ldots, \rho_d(w)\right),$ where $\rho_j(w)$ denotes the number of occurrences of $j$ within $w.$ Clearly $|w| = \rho_1(w) + \cdots + \rho_d(w).$ (Rohit Parikh, the namesake of this string signature, first implemented these vectors in his work on context-free languages in \cite{Parikh}.)

Lastly, $I_d$ is the $d \times d$ identity matrix, $\mathbf{0}_d$ is the $d$-dimensional zero vector, and we set $\mathbf{1}_d$ to be the vector in $\Z^d$ with each component equal to 1. 

\section{Fourier Analysis of the Spectral Density Functions of Stabilized Power Sum Symmetric Polynomials}

\begin{definition}
The $r^{\text{th}}$ \textit{stabilized power sum symmetric polynomial} in $d$ variables is
\[p_{r, d}^{\star}[x](z_1, \ldots, z_d) = 1 - xp_{r, d}(z_1, \ldots, z_d),\]
\noindent where $x$ is an indeterminate.
\end{definition}

These polynomials are strongly $\D^d$-stable precisely when $|x| < 1/d.$ Corollary 5.7 of \cite{GK-VW} indicates that $p_{r, d}^{\star}[x](z_1, \ldots, z_d)$ admits the determinantal representation
\begin{equation}
p_{r, d}^{\star}[x](z_1, \ldots, z_d) = \det\!\left(I_d - xJ_d\text{diag}(z_1^r, \ldots, z_d^r)\right)
\end{equation}
\noindent where $J_d$ is the $d \times d$ all-ones matrix. (Grinshpan et al. \cite{GK-VW} proved more generally that every irreducible polynomial $P \in \C[z_1, \ldots, z_d]$ that is zero-free on the closure of a matrix unit polyball with $P(0) = 1$ admits some determinantal representation $P(\mathbf{z}) = \det(I_d - K\text{diag}(z_1, \ldots, z_d)),$ where $K$ is a strictly contractive matrix.) Under the additional assumptions that $x \in \R$ and $\mathbf{z} \in \T^d,$ we can write
\begin{equation}
\overline{p_{r, d}^{\star}[x](z_1, \ldots, z_d)} = \det\!\left(I_d - xJ_d\text{diag}(\overline{z_1}^r, \ldots, \overline{z_d}^r)\right),
\end{equation}
\noindent and so the restriction of the spectral density function $\mathcal{S}_{p_{r, d}^{\star}[x]}$ to $\T^d$ equals
\begin{align} \label{spd}
\frac{1}{\left|p_{r, d}^{\star}[x](z_1, \ldots, z_d)\right|^2} &= \det\!\left(\begin{array}{cc} I_d - xJ_d\text{diag}(z_1^r, \ldots, z_d^r) & 0 \\ 0 & I_d - xJ_d\text{diag}(\overline{z_1^r}, \ldots, \overline{z_d^r}) \end{array}\right)^{-1} \nonumber
\\ & = \det\!\left(I_{2d} - x\!\left[\begin{array}{cc} J_d\text{diag}(z_1^r, \ldots, z_d^r) & 0 \\ 0 & J_d\text{diag}(\overline{z_1}^r, \ldots, \overline{z_d}^r) \end{array}\right]\right)^{-1}.
\end{align}
\noindent If we let $A_d(\mathbf{z}) = (a_{i,j})_{2d \times 2d}$ denote the block diagonal matrix
\[\left[\begin{array}{cc} J_d\text{diag}(z_1^r, \ldots, z_d^r) & 0 \\ 0 & J_d\text{diag}(\overline{z_1}^r, \ldots, \overline{z_d}^r) \end{array}\right],\]
\noindent then the MacMahon Master Theorem gives us that
\begin{equation} \label{MMT}
\frac{1}{\left|p_{r, d}^{\star}[x](z_1, \ldots, z_d)\right|^2} = \sum_{n = 0}^{\infty} \left[\sum_{\substack{(n_1, \ldots, n_{2d}) \in \N_0^d, \\ n_1 + \cdots + n_{2d} = n}} G(n_1, \ldots, n_{2d})\right]x^n
\end{equation}
\noindent where, because of the way $A_d(\mathbf{z})$ is partitioned,
\begin{align} \label{MMTcoefficients}
G(n_1, \ldots, n_{2d}) &= [y_1^{n_1}\cdots y_{2d}^{n_{2d}}]\prod_{i = 1}^{2d} (a_{i,1}y_1 + \cdots + a_{i,2d}y_{2d})^{n_i} \nonumber
\\ & = \binom{n_1 + \cdots + n_d}{n_1, \ldots, n_d}\binom{n_{d + 1} + \cdots + n_{2d}}{n_{d + 1}, \ldots, n_{2d}}\prod_{i = 1}^{d} z_i^{rn_i}\overline{z_i}^{rn_{i + d}}.
\end{align}
\noindent For a given $\mathbf{z} \in \T^d,$ series (\ref{MMT}) converges absolutely for $|x| < 1/\rho(A_d(\mathbf{z})),$ where $\rho(A)$ is the spectral radius of $A.$

Now if we were to calculate an individual Fourier coefficient of (\ref{MMT}), a subset of the $G(n_1, \ldots, n_{2d})$ terms in the resulting formal power series will vanish. In this chapter, we impart enumerative meaning to those terms in the summation that survive. More specifically, the Fourier coefficients of $\mathcal{S}_{p_{r, d}^{\star}[x]}$ are all OGFs for combinatorial classes of constrained finite strings.

\subsection{Offset Words}

Let us first forge the combinatorial classes enumerated by $\widehat{\mathcal{S}_{p_{r, d}^{\star}[x]}}(\xi)$ for $\xi \in \Z^d.$

\begin{definition}
For $(n, \xi) \in \N_0 \times \Z^d,$ an $n^{\text{th}}$ \textit{order word pair offset by} $\xi$ is an ordered pair of words $w, w' \in \Sigma_d^*$ such that the length of $ww'$ is $2n + \|\xi\|_1$ and $\rho(w) - \rho(w') = \xi.$ We denote the set of $n^{\text{th}}$ order word pairs offset by $\xi$ by $\mathcal{W}_{(n, \xi)},$ the set of all word pairs offset by $\xi$ (regardless of order) by $\mathcal{W}_{\xi},$ and let $w_{(n, \xi)} = \left|\mathcal{W}_{(n, \xi)}\right|.$
\end{definition}

Intuitively $\xi$ signals how removed a particular concatenation $ww'$ is from producing an abelian square while the order $n$ tells how many characters $w$ and $w'$ have in common. (See Definition \ref{componentwisemin} and Theorem \ref{composition} below.) Evidently an $n^{\text{th}}$ order word pair offset by $\mathbf{0}_d$ yields an abelian square of length $2n$ since a string $ww'$ is an abelian square if and only if $\rho(w) = \rho(w').$

\begin{example}
Consider the alphabet $\Sigma_3 = \{1, 2, 3\}.$ Let $w = 13$ and $w' = 12.$ Then $(w, w')$ is a first order word pair offset by $(0, -1, 1).$ The string 1312 is also the concatenation of

\begin{itemize}
\item a zeroth order word pair offset by $(-2, -1, -1)$ with $w = \varepsilon,$ the empty string, and $w' = 1312,$

\item a first order word pair offset by $(0, -1, -1)$ with $w = 1$ and $w' = 312,$

\item a zeroth order word pair offset by $(2, -1, 1)$ with $w = 131$ and $w' = 2,$

\item a zeroth order word pair offset by $(2, 1, 1)$ with $w = 1312$ and $w' = \varepsilon.$
\end{itemize}
\end{example}

\begin{prop} \label{nondisjoint}
Every string $w \in \Sigma_d^*$ emanates from $|w| + 1$ of the sets $\mathcal{W}_{(n, \xi)}.$
\end{prop}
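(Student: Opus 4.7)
The plan is to set up a bijection between the $|w|+1$ factorizations $w = uv$ with $u, v \in \Sigma_d^*$ and the pairs $(n, \xi) \in \N_0 \times \Z^d$ for which $w \in \mathcal{W}_{(n, \xi)}$. Any factorization forces $\xi := \rho(u) - \rho(v)$, and to satisfy the length constraint $2n + \|\xi\|_1 = |w|$ forces $n := (|w| - \|\xi\|_1)/2$. Conversely, each membership $w \in \mathcal{W}_{(n, \xi)}$ is by definition witnessed by such a factorization. So it suffices to verify two things: (i) the candidate pair $(n, \xi)$ always lands in $\N_0 \times \Z^d$, and (ii) distinct factorizations produce distinct pairs $(n, \xi)$.

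For (i), non-negativity of $n$ follows from the componentwise triangle inequality $\|\xi\|_1 = \sum_{k=1}^{d} |\rho_k(u) - \rho_k(v)| \le \sum_{k=1}^{d}(\rho_k(u) + \rho_k(v)) = |w|$. Integrality follows from the componentwise observation that $a$ and $|a|$ always agree modulo $2$, which yields $\|\xi\|_1 \equiv |u| + |v| = |w| \pmod{2}$.

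For (ii), rewrite $\xi = 2\rho(u) - \rho(w)$ so that $\xi$ depends injectively on $\rho(u)$. The $|w|+1$ factorizations are indexed by the length $|u| \in \{0, 1, \ldots, |w|\}$, and since $|\rho(u)| = |u|$ the corresponding Parikh vectors have $|w|+1$ different sizes, hence are distinct, hence yield $|w|+1$ distinct values of $\xi$. This gives the desired count.

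The main obstacle is essentially cosmetic: it is the parity/non-negativity verification in step (i) that guarantees the definition of $\mathcal{W}_{(n, \xi)}$ applies uniformly to every factorization. No deeper combinatorial machinery is required beyond elementary manipulation of Parikh vectors.
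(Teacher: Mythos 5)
Your proof takes the same route as the paper: index the candidate memberships by the $|w|+1$ left-right factorizations $w=uv$ and read off the forced pair $(n,\xi)$. Your version is in fact more complete. The paper only verifies that each factorization produces a valid pair $(n_k,\xi_k)$ with $n_k \in \N_0$ (via the parity/nonnegativity check, which you handle identically), and leaves implicit both that distinct factorizations produce distinct pairs and that every membership arises from some factorization. You close both gaps explicitly: the identity $\xi = 2\rho(u) - \rho(w)$ together with $|\rho(u)| = |u|$ gives injectivity of the factorization-to-pair map, and the ``conversely'' remark disposes of surjectivity onto the set of memberships. These are exactly the observations one needs to upgrade ``at least $|w|+1$'' to ``exactly $|w|+1$,'' so your writeup is a strict improvement over the paper's in rigor while following the same strategy.
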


\begin{proof}
Suppose $|w| = N$ and fix $k \in [0 : N].$ There is only one way to write $w$ as the concatenation of two strings $w_k$ and $w_{N - k}'$ such that $|w_k| = k$ and $|w_{N - k}'| = N - k.$ In setting $\xi_k = \rho(w_k) - \rho(w_{N - k}'),$ we note that $N - \|\xi_k\|_1$ is even because
\begin{align*}
N - \|\xi_k\|_1 &= N - \sum_{j = 1}^{d} \left|\rho_j(w_k) - \rho_j(w_{N - k}')\right|
\\ &\equiv N - \sum_{j = 1}^{d} \left(\rho_j(w_k) - \rho_j(w_{N - k}')\right) (\text{mod}\, 2) = 2\sum_{j = 1}^{d} \rho_j(w_{N - k}')\, (\text{mod}\, 2).
\end{align*}
\noindent Furthermore
\[\sum_{j = 1}^{d} \left|\rho_j(w_k) - \rho_j(w_{N - k}')\right| \leq \sum_{j = 1}^{d} \left(\left|\rho_j(w_k)\right| + \left|\rho_j(w_{N - k}')\right|\right) = |w|,\]
\noindent and so $\frac{1}{2}(N - \|\xi\|_k)$ is a nonnegative integer. Therefore $w \in \mathcal{W}_{(n_k, \xi_k)}$ with $n_k = \frac{1}{2}(N - \|\xi_k\|_1).$ It is clear that different $k$ yield different $\xi_k$ and that there are only $N + 1$ ways to factor $w$ into two consecutive substring blocks.
\end{proof}

Except for the empty string, which is solely an abelian square, every string in $\Sigma_d^*$ can be associated with multiple offset word pairs. This may seem to undermine the utility of Definition 3.2, but offset word pairs are a natural generalization of abelian squares in the following sense: the OGFs for $w_{(n, \xi)}$ are the Fourier coefficients of one, and essentially only one, operator from $\R$ into $L^2(\T^d).$ We will demonstrate this later in the chapter.

We will now count the number of $n^{\text{th}}$ order word pairs offset by $\xi.$  To elucidate the counting argument, let us first introduce the concept of mutuality.

\begin{definition} \label{componentwisemin}
The \textit{mutuality} $\nu(w, w')$ of two strings $w, w' \in \Sigma_d^*$ is the component-wise minimum of $\rho(w)$ and $\rho(w'),$
\end{definition}

\begin{lemma} \label{mutuality}
For any $w, w' \in \Sigma_d^*,$
\begin{equation}
\nu(w, w') = \rho(w) - (\rho(w) - \rho(w'))^+ = \rho(w') - (\rho(w) - \rho(w')^-.
\end{equation}
\end{lemma}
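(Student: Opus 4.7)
My plan is to reduce the claim to a coordinate-wise scalar identity and then verify it using the elementary positive/negative-part decomposition that the authors highlight in Section 2.

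Concretely, by the definition of $\nu$ as a componentwise minimum, it suffices to show that for every pair of integers $a, b$ (which will be $a = \rho_j(w)$ and $b = \rho_j(w')$), one has
\[
\min(a, b) \;=\; a - (a - b)^+ \;=\; b - (a - b)^-.
\]
So the whole lemma collapses to a scalar identity, and then I recover the vector statement by applying it with $a = \rho_j(w)$ and $b = \rho_j(w')$ for each $j \in [1 : d]$.

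For the scalar identity itself, the middle-to-right equality is essentially a restatement of the decomposition $c = c^+ - c^-$ from Section 2. Setting $c = a - b$, this gives $a - b = (a-b)^+ - (a-b)^-$, which rearranges at once to $a - (a-b)^+ = b - (a-b)^-$. For the left equality, I would simply split on the sign of $c = a - b$: if $a \geq b$, then $(a-b)^+ = a - b$ and $(a-b)^- = 0$, and both expressions reduce to $b = \min(a,b)$; if $a < b$, then $(a-b)^+ = 0$ and $(a-b)^- = b - a$, and both reduce to $a = \min(a,b)$. Alternatively, one can invoke the identity $\min(a, b) = a - \max(a - b, 0) = a - (a-b)^+$ directly, bypassing case analysis.

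There is no substantive obstacle here; the lemma is a bookkeeping identity, and the only real choice is whether to present it as a two-case verification or as a one-line consequence of $\min(a,b) = a - (a-b)^+$. I would favor the latter for brevity, combined with the $c^+ - c^-$ identity for the remaining equality, so that the proof reads as a single short paragraph pointing to the conventions already established in Section 2.
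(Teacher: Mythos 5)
Your proof is correct, and it takes a slightly different route than the paper's. Both proofs reduce the lemma to a per-coordinate scalar identity and both appeal to the positive/negative-part decomposition from Section~2, but the scalar identity at the core differs. The paper starts from the symmetric formula $\min\{x,y\} = \tfrac{1}{2}(x + y - |x-y|)$, replaces $|c|$ by $c^+ + c^-$, regroups to obtain $\tfrac{1}{2}\bigl(\rho(w) - (\rho(w)-\rho(w'))^+\bigr) + \tfrac{1}{2}\bigl(\rho(w') - (\rho(w)-\rho(w'))^-\bigr)$, and then uses the fact that the two bracketed terms are equal (via $c = c^+ - c^-$) to conclude. You instead invoke the asymmetric identity $\min(a,b) = a - (a-b)^+$ directly (or derive it by a two-case split on the sign of $a-b$) to get the left equality, and then get the right equality from $c = c^+ - c^-$ exactly as the paper does for its rightmost equality. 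Your argument is a bit shorter and more direct; the paper's average formula treats the two sides of the identity symmetrically at the cost of an extra regrouping step. Both are complete and correct, so this is a matter of taste, not substance.
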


\begin{proof}
The rightmost equality follows immediately from the integer decomposition defined in Chapter 2. From the fact that for any two numbers $x$ and $y,$ $\min\{x, y\} = (x + y - |x - y|)/2,$ we see that
\begin{align*}
\nu(w, w') &= \frac{1}{2}(\rho(w) + \rho(w') - \text{abs}(\rho(w) - \rho(w')))
\\ &= \frac{1}{2}(\rho(w) + \rho(w') - ((\rho(w) - \rho(w'))^+ + (\rho(w) - \rho(w'))^-))
\\ &= \frac{1}{2}\!\left(\rho(w) - (\rho(w) - \rho(w'))^+\right) + \frac{1}{2}\!\left(\rho(w') - (\rho(w) - \rho(w')^-\right).
\end{align*}
\noindent The result now follows.
\end{proof}

So the concatenation $ww'$ belongs to $\mathcal{W}_{\xi}$ if and only if $\rho(w) = \nu(w, w') + \xi^+$ and $\rho(w') = \nu(w, w') + \xi^-.$ Thus, for each $j \in \Sigma_d,$ the words $w$ and $w'$ contain, respectively, at least $\xi_j^+$ and $\xi_j^-$ instances of $j.$

\begin{lemma} \label{composition}
If $(w,w') \in \mathcal{W}_{(n, \xi)},$ then $\nu(w, w')$ is a weak composition of $n.$
\end{lemma}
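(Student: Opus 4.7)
The plan is to show two things: that $\nu(w,w')$ has nonnegative integer entries (so it is a weak composition of some nonnegative integer), and that the sum of its entries is exactly $n$. The first is automatic from the definition, since $\nu(w,w')$ is the componentwise minimum of two Parikh vectors, which are themselves elements of $\N_0^d$. So the heart of the matter is to pin down $|\nu(w,w')|$.

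To do this, I would apply Lemma \ref{mutuality} directly. Since $\rho(w) - \rho(w') = \xi$ by the defining property of $\mathcal{W}_{(n,\xi)}$, we get $(\rho(w)-\rho(w'))^+ = \xi^+$ and $(\rho(w)-\rho(w'))^- = \xi^-$, so
\[
\nu(w,w') = \rho(w) - \xi^+ = \rho(w') - \xi^-.
\]
Taking $\ell^1$-norms (which are just sums of entries, since everything is nonnegative) gives $|\nu(w,w')| = |w| - \|\xi^+\|_1 = |w'| - \|\xi^-\|_1$.

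Now I add the two equal expressions: $2|\nu(w,w')| = (|w|+|w'|) - (\|\xi^+\|_1 + \|\xi^-\|_1) = |ww'| - \|\xi\|_1$, using the identity $|a| = a^+ + a^-$ entrywise from Chapter 2. Since $ww' \in \mathcal{W}_{(n,\xi)}$ has length $2n + \|\xi\|_1$, this simplifies to $2|\nu(w,w')| = 2n$, i.e.\ $|\nu(w,w')| = n$, as desired.

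I do not anticipate a genuine obstacle here; the argument is essentially a one-line consequence of Lemma \ref{mutuality} combined with the length constraint built into the definition of $\mathcal{W}_{(n,\xi)}$. The only subtle point worth pausing on is the sanity check that $\rho(w) \geq \xi^+$ and $\rho(w') \geq \xi^-$ componentwise, so that the expression $\rho(w) - \xi^+$ really is a nonnegative vector; but this is immediate from $\xi_j^+ = \max(\rho_j(w) - \rho_j(w'), 0) \leq \rho_j(w)$ and its analogue for $w'$.
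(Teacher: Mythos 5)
Your proof is correct and follows essentially the same route as the paper's: both reduce $|\nu(w,w')|$ to $\frac{1}{2}(|ww'| - \|\xi\|_1) = n$ via Lemma \ref{mutuality}, with you invoking the stated conclusion $\nu(w,w') = \rho(w) - \xi^+ = \rho(w') - \xi^-$ while the paper uses the intermediate form $\nu(w,w') = \frac{1}{2}(\rho(w) + \rho(w') - \mathrm{abs}(\xi))$ from that lemma's proof. The explicit nonnegativity check you include is a harmless (and welcome) addition that the paper leaves implicit.
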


\begin{proof}
Since $|ww'| = 2n + \|\xi\|_1,$ it follows from Proposition \ref{nondisjoint} and Lemma \ref{mutuality} that
\[|\nu(w, w')| = \frac{1}{2}\!\left|\rho(w) + \rho(w') - \text{abs}(\xi)\right| = \frac{1}{2}\sum_{j = 1}^{d} [\rho_j(ww') - |\xi_j|] = n.\]
\end{proof}

\begin{theorem}
For all $n \in \N_0$ and $\xi \in \Z^d,$
\begin{equation} \label{enumeration}
w_{(n, \xi)} = \sum_{\substack{\nu \in \N_0^d, \\ |\nu| = n}} \binom{|\nu + \xi^+|}{\nu + \xi^+}\binom{|\nu + \xi^-|}{\nu + \xi^-}.
\end{equation}
\end{theorem}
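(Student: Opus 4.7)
The plan is to partition $\mathcal{W}_{(n,\xi)}$ according to the mutuality $\nu = \nu(w,w')$ of each factorization $ww'$, and then reduce the enumeration on each block to a product of multinomial counts. By Lemma \ref{composition}, the mutualities that arise from elements of $\mathcal{W}_{(n,\xi)}$ are exactly weak compositions of $n$ with $d$ parts, which will furnish the index set of the outer sum on the right-hand side of \eqref{enumeration}.

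For a fixed $\nu \in \N_0^d$ with $|\nu| = n$, I would count pairs $(w, w')$ with $\nu(w, w') = \nu$ and $\rho(w) - \rho(w') = \xi$. By the observation following Lemma \ref{mutuality}, such a pair must satisfy $\rho(w) = \nu + \xi^+$ and $\rho(w') = \nu + \xi^-$. Conversely, any two strings with these prescribed Parikh vectors automatically realize the correct offset and mutuality, since $\xi_j^+ \xi_j^- = 0$ for each $j$ forces the componentwise minimum of $\nu + \xi^+$ and $\nu + \xi^-$ to be $\nu$, while $(\nu + \xi^+) - (\nu + \xi^-) = \xi^+ - \xi^- = \xi$. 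Therefore, enumerating such pairs reduces to independently counting strings with a prescribed Parikh vector.

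The number of strings with Parikh vector $\alpha \in \N_0^d$ is the multinomial coefficient $\binom{|\alpha|}{\alpha}$, as this is the number of arrangements of $|\alpha|$ letters where each letter $j \in \Sigma_d$ appears $\alpha_j$ times. Applying this to $w$ and $w'$ separately gives a factor of $\binom{|\nu + \xi^+|}{\nu + \xi^+}\binom{|\nu + \xi^-|}{\nu + \xi^-}$, and the combined length $|w| + |w'| = |\nu + \xi^+| + |\nu + \xi^-| = 2n + \|\xi\|_1$ matches the length demanded of members of $\mathcal{W}_{(n,\xi)}$. Summing over all weak compositions $\nu$ of $n$ with $d$ nonnegative integer parts then yields \eqref{enumeration}.

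There is no real obstacle here; the only subtlety is checking that the correspondence $(w, w') \leftrightarrow (\nu, w, w')$ is genuinely a partition of $\mathcal{W}_{(n, \xi)}$, i.e.\ that no pair is counted under two different $\nu$'s and no pair is omitted. Uniqueness follows because $\nu(w, w')$ is a function of $(w, w')$, and coverage follows from Lemma \ref{composition} together with Lemma \ref{mutuality}, which together force the mutuality of any element of $\mathcal{W}_{(n, \xi)}$ to appear somewhere in the index set of the sum.
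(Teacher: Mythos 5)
Your proposal is correct and follows essentially the same strategy as the paper's: partition $\mathcal{W}_{(n,\xi)}$ by mutuality $\nu(w,w')$, invoke Lemmas \ref{mutuality} and \ref{composition} to pin down the Parikh vectors of $w$ and $w'$ as $\nu+\xi^+$ and $\nu+\xi^-$, and count strings with a given Parikh vector by the multinomial coefficient. You are somewhat more explicit than the paper in verifying that the map to mutualities genuinely partitions $\mathcal{W}_{(n,\xi)}$ (e.g.\ the observation that $\xi_j^+\xi_j^-=0$ guarantees the converse direction), which is a welcome bit of rigor, but the underlying argument is the same.
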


\begin{proof}
Given a weak composition $\nu$ of $n$ into $d$ parts, the number of ways to build two strings $w$ and $w'$ so that $\rho(w) = \nu + \xi^+$ and $\rho(w') = \nu + \xi^-$ is $\binom{|\nu + \xi^+|}{\nu + \xi^+}\binom{|\nu + \xi^-|}{\nu + \xi^-}.$ Lemmas \ref{mutuality} and \ref{composition} indicate that summing this across all weak compositions of $n$ into $d$ parts gives $w_{(n, \xi)}.$
\end{proof}

\subsection{The Fourier Coefficients as Generating Functions}

We now return our attention to the polynomial $p_{1, d}^{\star}[x](z_1, \ldots, z_d)$ with $x \in \R$ and $\mathbf{z} \in \T^d.$ Since $\mathcal{S}_{p_{1, d}^{\star}[x]}|_{\T^d}$ can be presented as a power series in each variable $z_j,$ it converges uniformly on every compact subset of $(-1/d, 1/d) \times \T^d.$ Of course, expressing $\mathcal{S}_{p_{1, d}^{\star}[x]}|_{\T^d}$ as a power series in the $z_j$ requires rearrangement of the terms. This is justified since (\ref{MMT}) holds on all of $(-1/d, 1/d) \times \T^d,$ rendering the series absolutely convergent.

Let us now compute the Fourier coefficients of $\mathcal{S}_{p_{1, d}^{\star}[x]}:$
\begin{equation}
\widehat{\mathcal{S}_{p_{1, d}^{\star}[x]}}(\xi) = \frac{1}{(2\pi)^d}\int\limits_{[0, 2\pi]^d} e^{-i\xi^{\textsf{T}}\theta}\mathcal{S}_{p_{1, d}^{\star}[x]}(e^{i\theta_1}, \ldots, e^{i\theta_d})\,\mathrm{d}\theta.
\end{equation}
\noindent Here $\xi^{\textsf{T}}$ is the transpose of $\xi,$ $\theta = (\theta_1, \ldots, \theta_d),$ and the integral is relative to the completion of the $d$-fold product of the Lebesgue measure on $\R.$ By (\ref{MMT}), $\widehat{\mathcal{S}_{p_{1, d}^{\star}[x]}}(\xi)$ can be rewritten as
\[\frac{1}{(2\pi)^d}\!\sum_{n = 0}^{\infty}\!\left[\int\limits_{[0, 2\pi]^d}\! e^{-i\xi^{\textsf{T}}\theta}\!\sum_{\substack{(n_1, \ldots, n_{2d}) \in \N_0^d, \\ n_1 + \cdots + n_{2d} = n}}\!\binom{n_1 + \cdots + n_d}{n_1, \ldots, n_d}\!\binom{n_{d + 1} + \cdots + n_{2d}}{n_{d + 1}, \ldots, n_{2d}}\!\prod_{i = 1}^{d}\! e^{i(n_i - n_{i + d})\theta_i}\mathrm{d}\theta\!\right]\!x^n\]
\begin{equation} \label{spdfourier}
= \frac{1}{(2\pi)^d}\sum_{n = 0}^{\infty}\left[\int\limits_{[0, 2\pi]^d}\sum_{k = 0}^{n} \sum_{\substack{\kappa, \kappa' \in \N_0^d, \\ |\kappa| = k, \\ |\kappa'| = n - k}} \binom{|\kappa|}{\kappa}\binom{|\kappa'|}{\kappa'}e^{i(\kappa - \kappa' - \xi)^{\textsf{T}}\theta}\mathrm{d}\theta\right]\!x^n.
\end{equation}
\noindent where the uniform convergence of $\mathcal{S}_{p_{1, d}^{\star}[x]}$ allows us to interchange the summation and the multiple integral with impunity. Fubini's theorem also grants us treatment of these Fourier coefficients as iterated integrals. So since $\int_{0}^{2\pi} e^{ia\theta}\,\mathrm{d}\theta$ is $2\pi$ when $a = 0$ and $0$ if $a \in \Z - \{0\},$ the only terms in the integrand of (\ref{spdfourier}) that remain are those for which $\kappa - \kappa' - \xi = \mathbf{0}_d,$ which implies that $\kappa - \xi^+ = \kappa' - \xi^-.$ Hence, in setting $\nu = \kappa - \xi^+,$ we can simplify the power series for $\widehat{\mathcal{S}_{p[x]_{1, d}^*}}(\xi)$ to
\begin{equation} \label{ogfbylength}
\sum_{n = 0}^{\infty} \left[\sum_{\substack{\nu \in \N_0^d, \\ |\nu| = n}} \binom{|\nu + \xi^+|}{\nu + \xi^+}\binom{|\nu + \xi^-|}{\nu + \xi^-}\right]\!x^{2n + \|\xi\|_1}.
\end{equation}
\noindent Replace $x$ with $\sqrt{x}$ in (\ref{ogfbylength}) and then normalize to conclude the following.

\begin{theorem} \label{ogf}
The OGF for $\mathcal{W}_{\xi},$ with $\xi \in \Z^d,$ counted according to the order $n,$ is
\begin{equation}
W_{\xi}(x) = \frac{1}{x^{\frac{1}{2}\|\xi\|_1}}\widehat{\mathcal{S}_{p_{1, d}^{\star}[\sqrt{x}]}}(\xi).
\end{equation}
\noindent The OGF for $\mathcal{W}_{\xi},$ counted according to the length $2n + \|\xi\|_1,$ is just $\widehat{\mathcal{S}_{p_{1, d}^{\star}[x]}}(\xi).$
\end{theorem}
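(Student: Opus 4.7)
The plan is to compute $\widehat{\mathcal{S}_{p[x]_{1,d}^*}}(\xi)$ directly from its integral definition, substitute the MacMahon Master Theorem expansion (\ref{MMT}) specialized to $r=1$, and extract the surviving terms via character orthogonality on $\T^d$. This essentially replays the computation sketched in the paragraphs immediately preceding the theorem; I would carry it through carefully and then convert from counting-by-length to counting-by-order by substituting $x \mapsto \sqrt{x}$.

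Concretely, I would begin from
\[
\widehat{\mathcal{S}_{p[x]_{1,d}^*}}(\xi) = \frac{1}{(2\pi)^d}\int_{[0,2\pi]^d} e^{-i\xi^{\textsf{T}}\theta}\,\mathcal{S}_{p[x]_{1,d}^*}(e^{i\theta_1},\ldots,e^{i\theta_d})\,\mathrm{d}\theta
\]
and substitute (\ref{MMT}). Renaming $(n_1,\ldots,n_d)=\kappa$ and $(n_{d+1},\ldots,n_{2d})=\kappa'$, the coefficient (\ref{MMTcoefficients}) contributes a phase $\prod_i e^{i(\kappa_i-\kappa_i')\theta_i}$ that combines with $e^{-i\xi^{\textsf{T}}\theta}$ to give $e^{i(\kappa-\kappa'-\xi)^{\textsf{T}}\theta}$. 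The uniform convergence of $\left.\mathcal{S}_{p[x]_{1,d}^*}\right|_{\T^d}$ on compact subsets of $(-1/d,1/d)\times\T^d$ (noted in the excerpt) lets me swap the sum and the integral; Fubini together with the orthogonality $\int_0^{2\pi} e^{ia\theta}\,\mathrm{d}\theta = 2\pi\,\delta_{a,0}$ applied coordinatewise then kills every term except those satisfying $\kappa-\kappa'=\xi$.

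By the integer-decomposition identity of Chapter 2, $\kappa-\kappa'=\xi$ is equivalent to $\kappa-\xi^+=\kappa'-\xi^-$; setting $\nu$ equal to this common value gives a bijection between the surviving pairs and $\nu\in\N_0^d$, under which the constraint $|\kappa|+|\kappa'|=n$ becomes $n=2|\nu|+\|\xi\|_1$. Thus
\[
\widehat{\mathcal{S}_{p[x]_{1,d}^*}}(\xi) = \sum_{m=0}^{\infty}\!\left[\,\sum_{\substack{\nu\in\N_0^d,\\ |\nu|=m}}\!\binom{|\nu+\xi^+|}{\nu+\xi^+}\binom{|\nu+\xi^-|}{\nu+\xi^-}\right]\!x^{2m+\|\xi\|_1},
\]
and the bracketed sum is exactly $w_{(m,\xi)}$ by (\ref{enumeration}). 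This already verifies the parenthetical claim (OGF by length). The main formula then follows from the substitution $x\mapsto\sqrt{x}$ and multiplication by $x^{-\|\xi\|_1/2}$, which shifts the grading from length $2m+\|\xi\|_1$ to order $m$.

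No single step is genuinely difficult; the argument is a character-orthogonality calculation, and every analytic input (uniform convergence of the MacMahon series, the enumeration (\ref{enumeration}), and the $\pm$-decomposition identities) has already been assembled. The only place demanding real care is the reparametrization, where one must verify that $\kappa=\nu+\xi^+$ and $\kappa'=\nu+\xi^-$ automatically lie in $\N_0^d$ for every $\nu\in\N_0^d$, and that the identification of $|\kappa|+|\kappa'|=n$ with $n=2|\nu|+\|\xi\|_1$ lines up cleanly with the length grading on $\mathcal{W}_{(m,\xi)}$.
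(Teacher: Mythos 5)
Your proposal is correct and follows exactly the paper's own route: expand $\mathcal{S}_{p[x]_{1,d}^*}$ via the MacMahon Master Theorem, interchange sum and integral by uniform convergence, kill the off-diagonal terms via character orthogonality, reparametrize by $\nu = \kappa - \xi^+ = \kappa' - \xi^-$, and finally substitute $x \mapsto \sqrt{x}$ and normalize. The final worry you flag is a non-issue, since $\xi^+,\xi^- \in \N_0^d$ always, so $\nu + \xi^\pm \in \N_0^d$ for every $\nu \in \N_0^d$.
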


The Fourier coefficients of $\mathcal{S}_{p_{r, d}^{\star}[x]}$ can be calculated in the same manner by noticing that $p_{r, d}^{\star}[x](z_1, \ldots, z_d) = p_{1, d}^{\star}[x](z_1^r, \ldots, z_d^r).$ Hence
\begin{align} \label{spdrpssp}
\widehat{\mathcal{S}_{p_{r, d}^{\star}[x]}}(\xi) &= \frac{1}{(2\pi)^d}\int\limits_{[0, 2\pi]^d} e^{-i\xi^{\textsf{T}}\theta}\mathcal{S}_{p_{1, d}^{\star}[x]}(e^{ir\theta_1}, \ldots, e^{ir\theta_d})\,\mathrm{d}\theta \nonumber
\\ &= \frac{1}{(2\pi)^d}\sum_{n = 0}^{\infty}\left[\int\limits_{[0, 2\pi]^d}\sum_{k = 0}^{n} \sum_{\substack{\kappa, \kappa' \in \N_0^d, \\ |\kappa| = k, \\ |\kappa'| = n - k}} \binom{|\kappa|}{\kappa}\binom{|\kappa'|}{\kappa'}e^{i(r\kappa - r\kappa' - \xi)^{\textsf{T}}\theta}\mathrm{d}\theta\right]\!x^n
\end{align}
\noindent for $|x| < 1/d.$ Like before, the only terms in the integrand of (\ref{spdrpssp}) that contribute to the sum are those for which $r\kappa - r\kappa' - \xi = \mathbf{0}_d.$ So $\widehat{\mathcal{S}_{p_{r, d}^{\star}[x]}}(\xi) = 0$ unless $r \mid \xi,$ in which case we equivalently require $\kappa - \kappa' - r^{-1}\xi = \mathbf{0}_d.$ The ensuing computation and analytic technicalities involved now match those of $\widehat{\mathcal{S}_{p_{1, d}^{\star}[x]}}(r^{-1}\xi).$

\begin{corollary}
For $\xi \in \Z^d,$
\begin{equation}
\widehat{\mathcal{S}_{p_{r, d}^{\star}[x]}}(\xi) = \widehat{\mathcal{S}_{p_{1, d}^{\star}[x]}}(r^{-1}\xi)\mathbbold{1}_{r\, \mid\, \xi},
\end{equation}
\noindent where $\mathbbold{1}_{r\, \mid\, \xi} = 1$ if $r \mid \xi$ and $0$ otherwise.
\end{corollary}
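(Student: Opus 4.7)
The plan is to mimic the argument for Theorem 3.7 essentially verbatim, using the factorization $p[x]_{r,d}^*(z_1, \ldots, z_d) = p[x]_{1,d}^*(z_1^r, \ldots, z_d^r)$ that is noted immediately above the corollary. This identity propagates through $\mathcal{S}$ in the obvious way and yields the MacMahon-type expansion (\ref{spdrpssp}) for $\widehat{\mathcal{S}_{p[x]_{r,d}^*}}(\xi)$: the only change from the derivation of (\ref{spdfourier}) is that the phase in the integrand becomes $e^{i(r\kappa - r\kappa' - \xi)^{\textsf{T}}\theta}$ in place of $e^{i(\kappa - \kappa' - \xi)^{\textsf{T}}\theta}$.

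First I would justify the interchange of the sum and integral exactly as in the $r = 1$ case: series (\ref{MMT}) converges absolutely on $(-1/d, 1/d) \times \T^d$ for the polynomial $p[x]_{r,d}^*$ too, since the spectral radius of $A_d(\mathbf{z})$ is still $d$ on $\T^d$. Next I would apply Fubini and the componentwise orthogonality $\int_{0}^{2\pi} e^{ia\theta}\,\mathrm{d}\theta = 2\pi\,\mathbbold{1}_{a = 0}$ to each $\theta_j$-integration, which annihilates every term in (\ref{spdrpssp}) except those for which $r\kappa - r\kappa' = \xi$ componentwise. Since $\kappa, \kappa' \in \N_0^d,$ this is solvable if and only if $r \mid \xi_j$ for every $j$; in the complementary case no term survives and $\widehat{\mathcal{S}_{p[x]_{r,d}^*}}(\xi) = 0$, producing the indicator $\mathbbold{1}_{r \mid \xi}$.

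When $r \mid \xi,$ the surviving constraint is $\kappa - \kappa' = r^{-1}\xi.$ After the substitution $\nu = \kappa - (r^{-1}\xi)^+,$ and using that $(r^{-1}\xi)^{\pm} = r^{-1}\xi^{\pm}$ because $r > 0,$ the resulting double sum is the very one that appeared in passing from (\ref{spdfourier}) to (\ref{ogfbylength}) with $\xi$ replaced by $r^{-1}\xi.$ Reading the identity backwards from (\ref{ogfbylength}) to the integral representation of $\widehat{\mathcal{S}_{p[x]_{1,d}^*}}(r^{-1}\xi)$ then produces the stated equality.

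There is no genuine obstacle here; the only point requiring a moment of care is the bookkeeping step that the positive/negative-part decomposition commutes with division by the positive integer $r,$ so that the binomial product $\binom{|\nu + (r^{-1}\xi)^+|}{\nu + (r^{-1}\xi)^+}\binom{|\nu + (r^{-1}\xi)^-|}{\nu + (r^{-1}\xi)^-}$ of formula (\ref{enumeration}) at argument $r^{-1}\xi$ matches the coefficient emerging from the substitution. Once this is noted, the derivation is a direct reuse of the machinery of Section 3.2.
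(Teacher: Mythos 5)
Your proposal is correct and follows essentially the same route as the paper: exploit $p[x]_{r,d}^*(\mathbf{z}) = p[x]_{1,d}^*(z_1^r, \ldots, z_d^r)$, push through the MacMahon expansion and orthogonality to see that only $r\kappa - r\kappa' = \xi$ survives, observe that solvability over $\N_0^d$ forces $r \mid \xi$, and then reduce to the already-established computation of $\widehat{\mathcal{S}_{p[x]_{1,d}^*}}(r^{-1}\xi)$. Your explicit remark that $(r^{-1}\xi)^{\pm} = r^{-1}\xi^{\pm}$ for $r > 0$ is a helpful bookkeeping detail the paper leaves implicit.
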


Consequently, $x^{-\frac{1}{2}\|r^{-1}\xi\|_1}\widehat{\mathcal{S}_{p_{r, d}^{\star}[\sqrt{x}]}}(\xi) = W_{r^{-1}\xi}(x)$ is the OGF for $w_{(n, r^{-1}\xi)},$ provided that $r \mid \xi.$ For this reason, we will henceforth analyze $\mathcal{S}_{p_{1, d}^{\star}[x]}$ exclusively; the combinatorics behind $\widehat{\mathcal{S}_{p_{r, d}^{\star}[x]}}(\xi)$ for $r > 1$ is redundant.

\section{The Integer Sequence $w_{(n, \xi)}$}

\subsection{Combinatorial Meaning of the Fourier Inversion} We begin this section by deriving a multiple integral representation of $w_{(n, \xi)}.$

\begin{corollary}
If $n \in \N_0$ and $\xi \in \Z^d,$ then
\begin{align} \label{integralrep}
\displaystyle w_{(n, \xi)} &= \frac{1}{(2\pi)^d}\int\limits_{[0, 2\pi]^d} \left[e^{-i\xi^{\textsf{T}}\theta}\left(\sum_{k = 1}^{d} e^{i \theta_k}\right)^{|\xi^+|}\!\left(\sum_{k = 1}^{d} e^{-i \theta_k}\right)^{|\xi^-|}\!\left|\sum_{k = 1}^{d} e^{i \theta_k}\right|^{2n}\right]\mathrm{d}\theta.
\end{align}
\end{corollary}

\begin{proof}
According to Theorem \ref{ogf},
\begin{equation}
w_{(n, \xi)} = [x^{2n + \|\xi\|_1}]\widehat{\mathcal{S}_{p_{1, d}^{\star}[x]}}(\xi).
\end{equation}
\noindent Now work backwards from (\ref{spdfourier}). Bearing in mind that the $k = n + |\xi^+|$ term of the sum on the right-hand side of (\ref{spdfourier}) is the only one that contributes to the integral, we see that
\begin{align*}
w_{(n, \xi)} &= \frac{1}{(2\pi)^d}\int\limits_{[0, 2\pi]^d} e^{-i\xi^{\textsf{T}}\theta}\!\sum_{\substack{\kappa, \kappa' \in \N_0^d, \\ |\kappa| = n + \xi^+, \\ |\kappa'| = n + \xi^-}}\binom{|\kappa|}{\kappa}\binom{|\kappa'|}{\kappa'}\prod_{i = 1}^{d} e^{i(\kappa - \kappa' - \xi)^{\mathsf{T}}\theta}\mathrm{d}\theta
\\ &= \frac{1}{(2\pi)^d}\!\int\limits_{[0, 2\pi]^d}\! e^{-i\xi^{\textsf{T}}\theta}\!\left(e^{i\theta_1} +  \cdots + e^{i\theta_d}\right)^{n + |\xi^+|}\!\left(e^{-i\theta_1} + \cdots + e^{-i\theta_d}\right)^{n + |\xi^-|}\mathrm{d}\theta.
\end{align*}
\noindent Formula (\ref{integralrep}) now follows by virtue of the fact that
\[e^{-i\theta_1} + \cdots + e^{-i\theta_d} = \overline{e^{i\theta_1} + \cdots + e^{i\theta_d}}.\]
\end{proof}

In particular, if we set $\xi = \mathbf{0}_{d}$ and perform a change of variables, then we recover the identity
\[\sum_{a_1+\cdots+a_n = k} \binom{k}{a_1,\ldots,a_n}^2 = \int\limits_{[0,1]^n} \left|\sum_{k = 1}^{n} e^{2\pi i x_k}\right|^{2k}\mathrm{d}\theta\]
that was derived in \cite{BNSW}.

\begin{theorem} \label{fourierinversion}
For all $x \in (-1/d, 1/d),$
\[\sum_{\xi \in \Z^d} \widehat{\mathcal{S}_{p_{1, d}^{\star}[x]}}(\xi) = \frac{1}{(1 - dx)^2}.\]
\end{theorem}

Due to the Fourier inversion theorem,
\begin{equation}
\sum_{\xi \in \Z^d} \widehat{\mathcal{S}_{p_{1, d}^{\star}[x]}}(\xi) = \mathcal{S}_{p_{1, d}^{\star}[x]}(\mathbf{1}_d) = \frac{1}{(1 - dx)^2}.
\end{equation}
Here, though, is a different, substantially more informative explanation.

\begin{proof}
By construction, $\sum_{\xi \in \Z^d} \widehat{\mathcal{S}_{p_{1, d}^{\star}[x]}}(\xi)$ is an OGF for the class of ordered pairs of words from $\Sigma_d^{*}.$ In the vein of Section I.4 of \cite{FS}, this class has combinatorial specification $\textsc{Seq}(\Sigma_d) \times \textsc{Seq}(\Sigma_d)$ and thus an OGF of $\frac{1}{(1 - dx)^2}.$
\end{proof}

Equating the coefficients of $x^{\ell}$ for both of the formal power series in Theorem \ref{fourierinversion} leads us to the following corollary.

\begin{corollary}
If $\ell \in \N_0,$ then
\[\sum_{\substack{n \in \N_0, \xi \in \Z^d, \\ 2n + \|\xi\|_1 = \ell}} w_{(n, \xi)} = (\ell + 1)d^{\ell}.\]
\end{corollary}

We can certainly use Corollary \ref{integralrep} to prove this, but, as always, a bijective proof is more enlightening. The left-hand sum is the number of all ordered pairs of words from $\Sigma_d^*$ whose concatenation has length $\ell.$ On the other hand, there are $d^{\ell}$ words in $\Sigma_d^*$ of length $\ell,$ each of which admits $\ell + 1$ bisections.

\subsection{Generalization of Andrews' Heuristic.} Naturally, we would like to find a closed-form expression for $w_{(n, \xi)}.$ Unfortunately, the sum in (\ref{enumeration}) and the integral in (\ref{integralrep}) look wholly intractable. The summation is over multi-indices which encumbers customary computational procedures such as the snake oil method and Gosper's algorithm. At the time of writing, $w_{(n, \mathbf{0}_d)}$ alone has defied evaluation for slightly over three decades. Richards and Cambanis \cite{CR} proposed the problem of calculating what they call $S(n, k) = \sum[\frac{k!}{k_1!k_2!\cdots k_n!}]^2,$ where the sum is over all nonnegative integers $k_1, \ldots, k_n,$ such that $k_1 + \cdots + k_n = k.$ Ciaob\u{a} \cite{Ciaoba} once commented that ``obtaining a closed formula\ldots seems to be an interesting and difficult combinatorial problem in itself.'' Andrews \cite{AKR} remarked that the presence of large prime factors in $S(n, k)$ makes obtaining a closed form finite product representation of $S(n, k)$ implausible. As we shall see in Theorem 4.2, Lemma 4.3, and Theorem 4.4, these same heuristic barriers persist in enumerating the $n^{\text{th}}$ order word pairs offset by $\xi.$ We can, however, glean a noteworthy recursive trait of the $w_{(n,\xi)}.$

\begin{theorem} \label{recurrence}
The number of $n^{\text{th}}$ order word pairs offset by $\xi \in \Z^d$ satisfies the recurrence
\begin{align}
w_{(n, \xi)} &= \sum_{j = 0}^{n} \binom{n + |\xi^+|}{j + \xi_{s_1}^+ + \cdots + \xi_{s_t}^+}\binom{n + |\xi^-|}{j + \xi_{s_1}^- + \cdots + \xi_{s_t}^-}
\\ & \hspace{1.5em} \times w_{(j, (\xi_{s_1}, \ldots, \xi_{s_t}))}w_{(n - j, (\xi_1, \ldots, \widehat{\xi_{s_1}}, \ldots, \widehat{\xi_{s_t}}, \ldots, \xi_d))}, \hspace{1em} \text{for}\ d \geq 2 \nonumber
\end{align}
\noindent where $t \in [1 : d - 1],$ $s_1 < \cdots < s_t$ are $t$ natural numbers selected from $\Sigma_d,$ and $\widehat{\xi_{s_1}}, \ldots, \widehat{\xi_{s_t}}$ means that the indices $\xi_{s_1}, \ldots, \xi_{s_t}$ are deleted from $\{\xi_1, \ldots, \xi_d\}.$
\end{theorem}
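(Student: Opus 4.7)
The plan is to give a direct bijective proof. The right-hand side suggests partitioning the alphabet $\Sigma_d$ into the block $S = \{s_1, \ldots, s_t\}$ and its complement $T = \Sigma_d \setminus S$, and decomposing an offset word of $\mathcal{W}_{(n, \xi)}$ according to which characters belong to $S$ versus $T$. For any string $y \in \Sigma_d^*$ and any subset $A \subseteq \Sigma_d$, let $y|_A$ denote the subsequence of $y$ obtained by retaining, in order, only those characters lying in $A$. Given a pair $(w, w')$ with $ww' \in \mathcal{W}_{(n, \xi)}$, I would send it to the pair $\bigl((w|_S,\, w'|_S),\, (w|_T,\, w'|_T)\bigr)$.

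Second, I would verify that $(w|_S)(w'|_S) \in \mathcal{W}_{(j,\, \xi_S)}$ for some nonnegative integer $j$, where $\xi_S = (\xi_{s_1}, \ldots, \xi_{s_t})$, and likewise that $(w|_T)(w'|_T) \in \mathcal{W}_{(n - j,\, \xi_T)}$ for the complementary tuple $\xi_T$. The offset condition is immediate, because the Parikh vector restricts componentwise, so $\rho(w|_S) - \rho(w'|_S) = \xi_S$ and analogously for $T$. The order decomposition follows from the fact that the mutuality $\nu(w, w')$ is also componentwise, hence $|\nu(w, w')| = \sum_{i \in S} \nu_i(w, w') + \sum_{i \in T} \nu_i(w, w')$; by Lemma \ref{composition} this forces the two orders to sum to $n$, which both pins $j$ to the range $[0 : n]$ and fixes the second order as $n - j$.

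Third, I would count the preimages of a fixed pair $\bigl((u, u'),\, (v, v')\bigr)$ under this map. Reconstructing $(w, w')$ amounts to choosing an interleaving of $u$ with $v$ to form $w$ together with an independent interleaving of $u'$ with $v'$ to form $w'$. Since $|w| = n + |\xi^+|$ and $|u| = j + \xi_{s_1}^+ + \cdots + \xi_{s_t}^+$, the number of interleavings producing $w$ is $\binom{n + |\xi^+|}{\,j + \xi_{s_1}^+ + \cdots + \xi_{s_t}^+\,}$, and a parallel argument with $\xi^-$ in place of $\xi^+$ gives $\binom{n + |\xi^-|}{\,j + \xi_{s_1}^- + \cdots + \xi_{s_t}^-\,}$ interleavings producing $w'$. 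Multiplying these binomials by $w_{(j,\, \xi_S)}$ and $w_{(n - j,\, \xi_T)}$ and summing over $j \in [0 : n]$ yields the stated recurrence.

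The proof is mostly bookkeeping, and the only mildly delicate point is the additivity of the mutuality across the alphabet split, which is what ensures that the order decomposes cleanly as $j + (n - j)$ rather than summing over a larger index set. Once this observation is in hand, the binomial factors emerge directly from the multinomial count of shuffles of $u$ with $v$ (and $u'$ with $v'$), and no nontrivial manipulation of the defining formula (\ref{enumeration}) is required.
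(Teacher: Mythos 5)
Your proposal is correct and takes essentially the same approach as the paper: both partition the alphabet into $S$ and $T$, observe that the Parikh vector and mutuality split componentwise so the order decomposes as $j + (n-j)$, and account for the two binomial factors as the number of ways to interleave the $S$-subword into $w$ and $w'$ (the paper phrases this as ``choose spots, then fill''). Your observation about the additivity of the mutuality, via Lemma~\ref{composition}, is exactly the step that justifies the range $j \in [0:n]$ in the sum (the paper's proof text contains a small typo writing $j \in [1:n]$).
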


\begin{proof}
Partition $\Sigma_d$ into two subsets $S$ and $T$ of $t$ and $d - t$ characters, respectively. Let $s_1 < \cdots < s_t$ be the elements of $S.$ We now count the number of $n^{\text{th}}$ order word pairs $(w,w')$ offset by $\xi$ by conditioning on the value of $\rho_{s_1}(w) + \cdots + \rho_{s_t}(w).$ Recall that $w$ must have at least $\xi_{s_k}^+$ occurrences of $s_k$ for each $s_k \in S$ and at least $\xi_m^+$ occurrences of $m$ for each $m \in T.$ So $\rho_{s_1}(w) + \cdots + \rho_{s_t}(w)$ can range from $\xi_{s_1}^+ + \cdots + \xi_{s_t}^+$ to $n + |\xi^+| - \sum_{m \in T} \xi_m^+ = n + \xi_{s_1}^+ + \cdots + \xi_{s_t}^+,$ inclusive.

Now for a given $j \in [1 : n],$ assume that $j + \xi_{s_1}^+ + \cdots + \xi_{s_t}^+$ of the characters in $w$ inhabit $S.$ To preserve the assigned offsetness, $j + \xi_{s_1}^- + \cdots + \xi_{s_t}^-$ of the characters in $w'$ must inhabit $S$ as well. Once the placements in $ww'$ for characters from $S$ have been selected, they can be filled in $w_{(j, (\xi_{s_1}, \ldots, \xi_{s_t}))}$ ways. The remaining spots,which are comprised of characters from $T,$ can be filled in $w_{(n - j, (\xi_1, \ldots, \widehat{\xi_{s_1}}, \ldots, \widehat{\xi_{s_t}}, \ldots, \xi_d))}$ ways. Summing $\binom{n + |\xi^+|}{j + \xi_{s_1}^+ + \cdots + \xi_{s_t}^+}\binom{n + |\xi^-|}{j + \xi_{s_1}^- + \cdots + \xi_{s_t}^-}w_{(j, (\xi_{s_1}, \ldots, \xi_{s_t}))}w_{(n - j, (\xi_1, \ldots, \widehat{\xi_{s_1}}, \ldots, \widehat{\xi_{s_t}}, \ldots, \xi_d))}$ over all possible $j$ completes the argument.
\end{proof}

We now prove a divisibility property of $w_{(n, \xi)}$ that generalizes a fact about $w_{(n, \mathbf{0}_d)}$ conjectured by Andrews and later proven by Kolitsch in \cite{AKR}. 

\begin{lemma} \label{constantvector}
\[w_{(n, m\mathbf{1}_d)} \equiv 0\ (\textup{mod}\ d), \hspace{2.5em} \text{for all integers}\ n\ \text{and}\ m\ \text{not both}\ 0.\]
\end{lemma}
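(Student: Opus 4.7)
The natural symmetry to exploit here is that $m\mathbf{1}_d$ is invariant under cyclic permutation of its entries, so I would set up a free group action of $\Z/d\Z$ on $\mathcal{W}_{(n, m\mathbf{1}_d)}$ and invoke the orbit-stabilizer theorem.

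First I would define the cyclic shift $\sigma : \Sigma_d \to \Sigma_d$ by $\sigma(j) = j + 1 \pmod d$, and extend it character-wise to a map $\sigma : \Sigma_d^* \to \Sigma_d^*$ by
\[\sigma(c_1 c_2 \cdots c_N) = \sigma(c_1)\sigma(c_2)\cdots\sigma(c_N).\]
This preserves length, and on Parikh vectors it induces the cyclic permutation $\rho(\sigma w) = (\rho_d(w), \rho_1(w), \ldots, \rho_{d-1}(w))$.

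Next I would verify that $\sigma$ maps $\mathcal{W}_{(n, m\mathbf{1}_d)}$ to itself. Since $\sigma$ preserves length, if $ww' \in \mathcal{W}_{(n, m\mathbf{1}_d)}$ is partitioned so that $|w| = n + |(m\mathbf{1}_d)^+|$ and $|w'| = n + |(m\mathbf{1}_d)^-|$, then $\sigma(ww') = (\sigma w)(\sigma w')$ splits the same way. The key point is that $\rho(\sigma w) - \rho(\sigma w')$ is the cyclic shift of $\rho(w) - \rho(w') = m\mathbf{1}_d$, which equals $m\mathbf{1}_d$ itself. Hence $\sigma(ww') \in \mathcal{W}_{(n, m\mathbf{1}_d)}$, and we obtain an action of the cyclic group $\langle\sigma\rangle \cong \Z/d\Z$ on the finite set $\mathcal{W}_{(n, m\mathbf{1}_d)}$.

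Then I would show this action is free whenever the underlying strings are nonempty. The stabilizer of a string $s \in \Sigma_d^*$ under $\langle\sigma\rangle$ consists of all $\sigma^k$ such that $\sigma^k$ fixes every character appearing in $s$. But $\sigma^k(j) = j$ forces $k \equiv 0 \pmod d$, so a nonempty string is fixed only by the identity. The strings in $\mathcal{W}_{(n, m\mathbf{1}_d)}$ have length $2n + d|m|$, which is positive exactly when $(n, m) \neq (0, 0)$; this is precisely the hypothesis of the lemma. Hence every orbit has size $d$, and $d \mid |\mathcal{W}_{(n, m\mathbf{1}_d)}| = w_{(n, m\mathbf{1}_d)}$.

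There is no real obstacle here: the only care needed is to confirm that the partition $ww'$ is respected by $\sigma$ and that the shift-invariance of $m\mathbf{1}_d$ makes the action well-defined on $\mathcal{W}_{(n, m\mathbf{1}_d)}$ rather than merely on the larger set $\bigcup_{\pi} \mathcal{W}_{(n, \pi(m\mathbf{1}_d))}$ indexed by cyclic permutations $\pi$ of the coordinates. Both checks are immediate because $m\mathbf{1}_d$, $(m\mathbf{1}_d)^+$, and $(m\mathbf{1}_d)^-$ all have equal components.
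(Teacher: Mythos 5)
Your proof is correct, and it takes a genuinely different and cleaner route than the paper's.

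The paper works with the enumeration formula $w_{(n, m\mathbf{1}_d)} = \sum_{|\nu| = n} \binom{|\nu + \xi^+|}{\nu + \xi^+}\binom{|\nu + \xi^-|}{\nu + \xi^-}$ and lets $\Z/d\Z$ act by cyclic permutation on the \emph{weak compositions} $\nu$ indexing the sum. That action is not free (a composition like $(k, k, \ldots, k)$ is fixed by every shift), so the authors must invoke the orbit-stabilizer theorem and Lagrange's theorem to classify orbit sizes, then compensate for short orbits by appealing to a theorem of Gould on the divisibility of multinomial coefficients: they show each term $\binom{|\pi^{\mathcal{O}} + m\mathbf{1}_d|}{\pi^{\mathcal{O}} + m\mathbf{1}_d}$ is divisible by $d/|\mathcal{O}|$, which is exactly the factor needed. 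Your proof instead lets $\Z/d\Z$ act directly on the \emph{strings} in $\mathcal{W}_{(n, m\mathbf{1}_d)}$ via the character-wise cyclic relabeling of the alphabet. Because no nontrivial power of the $d$-cycle $\sigma$ fixes any letter, the stabilizer of any nonempty string is trivial, so the action is automatically free and every orbit has size exactly $d$. This yields $d \mid w_{(n, m\mathbf{1}_d)}$ with no arithmetic input at all, sidestepping both the orbit-size bookkeeping and Gould's theorem. The one thing you gain beyond simplicity is a transparent explanation of why the hypothesis $(n,m) \neq (0,0)$ is needed: it is precisely what guarantees the strings are nonempty, hence that orbits are full. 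What you give up is the extra information latent in the paper's argument, namely the orbit-by-orbit divisibility of the individual multinomial coefficients, which is of independent interest but not needed for the statement itself.
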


\begin{proof}
Let the cyclic group $\Z_d$ act on the set $C$ of weak compositions of $n$ into $d$ parts by cyclically permuting the parts of a composition. Pick a representative $\pi^{\mathcal{O}}$ from each orbit $\mathcal{O} \in C/\Z_d.$ Then
\begin{align}
w_{(n, m\mathbf{1}_d)} &= \sum_{\mathcal{O} \in C/\Z_d} |\mathcal{O}|\binom{|\pi^{\mathcal{O}} + (m\mathbf{1}_d)^+|}{\pi^{\mathcal{O}} + (m\mathbf{1}_d)^+}\binom{|\pi^{\mathcal{O}} + (m\mathbf{1}_d)^-|}{\pi^{\mathcal{O}} + (m\mathbf{1}_d)^-}
\\ &= \sum_{\mathcal{O} \in C/\Z_d} |\mathcal{O}|\binom{|\pi^{\mathcal{O}}|}{\pi^{\mathcal{O}}}\binom{|\pi^{\mathcal{O}} + |m|\mathbf{1}_d|}{\pi^{\mathcal{O}} + |m|\mathbf{1}_d} \nonumber
\end{align}
\noindent since one of $m^+$ and $m^-$ is 0 while the other is $|m|.$

Now let $\mathcal{O} \in C/\Z_d$ be arbitrary. By the orbit-stabilizer theorem, together with Lagrange's theorem, the sizes of the orbits divide $d,$ and so
\[\pi^{\mathcal{O}} = (\underbrace{\pi^{\mathcal{O}}_1, \ldots, \pi^{\mathcal{O}}_{|\mathcal{O}|}}_{\text{written}\ d/|\mathcal{O}|\ \text{times}}).\]
\noindent Since $\pi^{\mathcal{O}}_1 + |m|, \ldots, \pi^{\mathcal{O}}_{|\mathcal{O}|} + |m|$ cannot all be 0, their greatest common divisor is nonzero. As a result,
\begin{align}
\binom{|\pi^{\mathcal{O}} + m\mathbf{1}_d|}{\pi^{\mathcal{O}} + m\mathbf{1}_d} &= \binom{(d/|\mathcal{O}|)(\pi^{\mathcal{O}}_1 + \cdots + \pi^{\mathcal{O}}_{|\mathcal{O}|} + |\mathcal{O}||m|)}{\underbrace{\pi^{\mathcal{O}}_1 + |m|, \ldots, \pi^{\mathcal{O}}_{|\mathcal{O}|} + |m|}_{\text{written}\ d/|\mathcal{O}|\ \text{times}}}
\\ &\equiv 0\ \left(\text{mod}\ \frac{(d/|\mathcal{O}|)(\pi^{\mathcal{O}}_1 + \cdots + \pi^{\mathcal{O}}_{|\mathcal{O}|} + |\mathcal{O}||m|)}{\gcd(\pi^{\mathcal{O}}_1 + |m|, \ldots, \pi^{\mathcal{O}}_{|\mathcal{O}|} + |m|)}\right), \nonumber
\end{align}
\noindent in which we have applied Theorem 1 in \cite{Gould}. Yet $\gcd(\pi^{\mathcal{O}}_1 + |m|, \ldots, \pi^{\mathcal{O}}_{|\mathcal{O}|} + |m|)$ divides $\pi^{\mathcal{O}}_1 + \cdots + \pi^{\mathcal{O}}_{|\mathcal{O}|} + |\mathcal{O}||m|$ so that
\begin{equation}
\frac{(d/|\mathcal{O}|)(\pi^{\mathcal{O}}_1 + \cdots + \pi^{\mathcal{O}}_{|\mathcal{O}} + |\mathcal{O}||m|)}{\gcd(\pi^{\mathcal{O}}_1 + |m|, \cdots, \pi^{\mathcal{O}}_{|\mathcal{O}|} + |m|)} \equiv 0\ (\text{mod}\ d/|\mathcal{O}|).
\end{equation}
\noindent Therefore
\begin{equation}
\binom{|\pi^{\mathcal{O}} + m\mathbf{1}_d|}{\pi^{\mathcal{O}} + m\mathbf{1}_d} \equiv 0\ (\text{mod}\ d/|\mathcal{O}|),
\end{equation}
\noindent and so
\begin{equation}
|\mathcal{O}|\binom{|\pi^{\mathcal{O}}|}{\pi^{\mathcal{O}}}\binom{|\pi^{\mathcal{O}} + m\mathbf{1}_d|}{\pi^{\mathcal{O}} + m\mathbf{1}_d} \equiv 0\ (\text{mod}\ d),
\end{equation}
\noindent which shows that $w_{(n, m\mathbf{1}_d)} \equiv 0\ (\text{mod}\ d).$
\end{proof}

\begin{theorem}
Let $d \geq 2$ and $\xi \in \Z^d - \{\mathbf{0}\},$ and for each $a \in \Z,$ let $o_a(\xi)$ be the number of occurrences of $a$ in $\xi.$ Then
\begin{equation}
w_{(n, \xi)} \equiv 0\ \left(\textup{mod}\ \textup{lcm}(1, 2, \ldots, \max_{a \neq 0} o_a(\xi))\right).
\end{equation}
\end{theorem}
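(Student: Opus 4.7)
The strategy is to verify, for each integer $k \in [1, m]$ with $m = \max_{a \ne 0} o_a(\xi)$, that $k$ individually divides $w_{(n, \xi)}$; the conclusion then follows upon taking the least common multiple over $k$. Fix a value $a^* \ne 0$ attaining $o_{a^*}(\xi) = m$, and let $S_0 \subseteq \Sigma_d$ denote the $m$-element set of coordinate positions at which $\xi$ equals $a^*$.

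The key insight is to invoke the splitting recurrence of Theorem \ref{recurrence} with the index subset $S$ chosen \emph{entirely inside} $S_0$, so that the selected sub-offset $(\xi_{s_1}, \ldots, \xi_{s_t})$ is forced to be the constant vector $a^* \mathbf{1}_t$. Concretely, for any $k \in [1, \min(m, d-1)]$, pick $S \subseteq S_0$ with $|S| = k$ and apply the recurrence; every summand on the right-hand side then carries a factor $w_{(j, a^* \mathbf{1}_k)}$. By Lemma \ref{constantvector}, applied in dimension $k$ with the ``not both zero'' hypothesis automatically satisfied because $a^* \ne 0$, each such factor is congruent to $0$ modulo $k$. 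Hence $k \mid w_{(n, \xi)}$.

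The boundary case $k = d$ can only arise when $m = d$, that is, when $\xi = a^* \mathbf{1}_d$. Here the recurrence of Theorem \ref{recurrence} is not directly available (it demands $t \leq d - 1$), but Lemma \ref{constantvector} applies verbatim to $w_{(n, a^* \mathbf{1}_d)}$ and yields $d \mid w_{(n, \xi)}$. Combining the divisibilities across all $k \in [1, m]$ produces $\textup{lcm}(1, 2, \ldots, m) \mid w_{(n, \xi)}$, as claimed.

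The only delicate point is the \emph{choice} of the selector set $S$: one must resist picking $S$ arbitrarily and instead align it with the level set $S_0$ where $\xi$ is constant and nonzero. Once that alignment is made, Lemma \ref{constantvector} delivers the needed modular information immediately, and no additional combinatorial identities or estimates are required.
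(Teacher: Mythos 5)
Your proposal is correct and matches the paper's own proof essentially verbatim: both fix a nonzero value $a_0$ attaining the maximal occurrence count, apply the recurrence of Theorem \ref{recurrence} with $S$ chosen inside the level set where $\xi = a_0$ so that the factor $w_{(j, a_0\mathbf{1}_t)}$ appears, and then invoke Lemma \ref{constantvector} to extract divisibility by $t$ for each $t$ up to the maximum. The only difference is that you explicitly flag and handle the boundary case $m = d$ (where the recurrence's requirement $t \leq d - 1$ would be violated) by appealing to Lemma \ref{constantvector} directly, a point the paper glosses over; this is a minor but genuine improvement in rigor.
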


\begin{proof}
Note that $\max_{a \neq 0} o_a(\xi) \geq 1.$ By letting $a_0$ be a nonzero integer with the maximal number of occurrences in $\xi$, we can apply Theorem \ref{recurrence} to get
\begin{equation}
w_{(n, \xi)} = \sum_{j = 0}^{n} \binom{n + |\xi^+|}{j + ta_0^+}\binom{n + |\xi^-|}{j + ta_0^-}w_{(j, a_0\mathbf{1}_t)}w_{(n - j, (\xi_1, \ldots, \widehat{a_0\mathbf{1}_t}, \ldots, \xi_d))}
\end{equation}
\noindent for each $t \in [1 : \max_{a \neq 0} o_a(\xi)].$ It follows from Lemma \ref{constantvector} that every summand is divisible by $t.$
\end{proof}

\section{Asymptotics}
In this chapter, we extract the asymptotic behavior of $w_{(n, \xi)},$ first as $n \to \infty$ with $\xi$ fixed, then as $\|\xi\|_1 \to \infty$ in a fixed direction in $\Z^d$ with $n$ fixed, and then as the dimension $d \to \infty$ with $n$ fixed and the components of $\xi$ fixed and all the same.

\subsection{Coefficient Asymptotics of $W_{\xi}$}

Below is our main asymptotic result.

\begin{theorem}
If $\xi \in \Z^d,$ then
\begin{equation} \label{coefficientasymptotic}
w_{(n, \xi)} \sim d^{2n + d/2 + \|\xi\|_1}(4\pi n)^{(1 - d)/2}
\end{equation}
\noindent as $n \to \infty.$
\end{theorem}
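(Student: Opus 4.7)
The integral representation of Corollary~4.1 presents $w_{(n,\xi)}$ as a Fourier integral whose $n\to\infty$ behavior is driven by the positive factor $|p_{1,d}(e^{i\theta})|^{2n}$. By the triangle inequality, $|p_{1,d}(e^{i\theta})|\le d$ with equality precisely on the diagonal $\theta_1=\cdots=\theta_d$---a one-dimensional locus rather than an isolated point. The first step is therefore to absorb this diagonal degeneracy via the substitution $\phi_j = \theta_j-\theta_d$ for $j=1,\ldots,d-1$ and $\phi_d = \theta_d$. The key cancellation is $|\xi^+|-|\xi^-| = \xi^{\textsf{T}}\mathbf{1}_d$, which makes the $e^{i\phi_d}$ factors pulled out of $e^{-i\xi^{\textsf{T}}\theta}$, $p_{1,d}^{|\xi^+|}$, and $\overline{p_{1,d}}^{|\xi^-|}$ annihilate, while $|p_{1,d}|^{2n}$ is already independent of $\phi_d$. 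The $\phi_d$-integration then yields a factor of $2\pi$, leaving the $(d-1)$-dimensional integral
\[w_{(n,\xi)} = \frac{1}{(2\pi)^{d-1}}\int_{[-\pi,\pi]^{d-1}} g(\phi')\,|f(\phi')|^{2n}\,\mathrm{d}\phi',\]
where $f(\phi') = 1+e^{i\phi_1}+\cdots+e^{i\phi_{d-1}}$ and $g(\phi') = e^{-i\sum_{j<d}\xi_j\phi_j}\,f(\phi')^{|\xi^+|}\,\overline{f(\phi')}^{|\xi^-|}$.

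On $[-\pi,\pi]^{d-1}$ the function $|f|$ attains its unique maximum $d$ at $\phi' = 0$ (the equality case in the triangle inequality forces $e^{i\phi_j}=1$ for every $j$), and $g(0) = d^{|\xi^+|+|\xi^-|} = d^{\|\xi\|_1}$, so Laplace's method applies to the phase $S(\phi') = \log|f(\phi')|^2$. A direct expansion from $|f(\phi')|^2 = d + 2\sum_{1\le j<k\le d}\cos(\phi_j-\phi_k)$ (with $\phi_d := 0$) gives $S(0) = 2\log d$, $\nabla S(0) = 0$, and Hessian
\[A := S''(0) = \frac{2}{d^2}J_{d-1} - \frac{2}{d}I_{d-1},\]
whose eigenvalues are $-2/d^2$ (simple, along the all-ones direction) and $-2/d$ with multiplicity $d-2$. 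Thus $A$ is negative-definite and $|\det A| = 2^{d-1}/d^d$. The standard Laplace asymptotic
\[\int g(\phi')\,e^{nS(\phi')}\,\mathrm{d}\phi' \sim g(0)\,e^{nS(0)}\Bigl(\frac{2\pi}{n}\Bigr)^{\!(d-1)/2}|\det A|^{-1/2}\]
simplifies to $d^{\|\xi\|_1+2n+d/2}\,(\pi/n)^{(d-1)/2}$, and dividing by $(2\pi)^{d-1}$ collapses this to $d^{2n+d/2+\|\xi\|_1}(4\pi n)^{(1-d)/2}$, as claimed.

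The only non-cosmetic step is the Laplace localization itself: I need to check that the tail $\{|\phi'|\ge\delta\}$ contributes a lower-order term. Since $\phi'=0$ is the unique maximum of the continuous function $|f|$ on the compact set $[-\pi,\pi]^{d-1}$, one has $\sup_{|\phi'|\ge\delta}|f(\phi')|/d = 1-\eta$ for some $\eta=\eta(\delta)>0$, so the tail contributes $O\bigl(d^{2n}(1-\eta)^{2n}\bigr)$---exponentially subdominant to the main contribution of order $d^{2n}n^{-(d-1)/2}$. Once this routine bound is in place, the remainder is a Gaussian moment computation in a neighborhood of $\phi'=0$ and amounts to bookkeeping of constants. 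A cleaner alternative would be to invoke the Pemantle--Wilson complex-phase stationary phase method cited in the abstract, but the real-analytic Laplace argument above suffices because the large factor $|f|^{2n}$ is already real and positive.
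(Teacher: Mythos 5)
Your proof is correct, and it takes a genuinely different route from the paper's. The paper proves this theorem by applying the Greenhill--Janson--Ruci\'nski Laplace method for lattice-point sums (Theorem 5.2 in the text) directly to the combinatorial formula (3.7): it writes $w_{(n,\xi)}$ as $(n+|\xi^+|)!(n+|\xi^-|)!\sum_{\ell} a_n(\ell)$ over the translated root lattice $A_{d-1}+nw$, verifies the GJR hypotheses via a somewhat lengthy Stirling expansion of $a_n(\ell)$, and obtains the Gaussian sum asymptotic with $\det(\mathcal{L})=\sqrt{d}$ and $\det(-H_\phi|_V)=(2d)^{d-1}$. You instead work from the integral representation of Corollary 4.1, kill the degenerate direction by the unimodular change of variables $\phi_j=\theta_j-\theta_d$ (using the identity $|\xi^+|-|\xi^-|=\sum_j\xi_j$ to cancel the $\phi_d$-dependence), integrate out $\phi_d$, and then apply the ordinary continuous Laplace method to a nondegenerate maximum at the origin of a $(d-1)$-dimensional torus. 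Your computation checks out: $A=\frac{2}{d^2}J_{d-1}-\frac{2}{d}I_{d-1}$ has $|\det A|=2^{d-1}/d^d$, $g(0)=d^{\|\xi\|_1}$, $e^{nS(0)}=d^{2n}$, and the prefactor $(2\pi)^{-(d-1)}\cdot(\pi/n)^{(d-1)/2}$ collapses to $(4\pi n)^{(1-d)/2}$. Your approach is arguably more self-contained (it uses only the classical Laplace method) and avoids the Stirling bookkeeping the paper must perform to massage $a_n(\ell)$ into the GJR format; the paper's approach has the advantage of staying entirely on the combinatorial side and never needing to justify the torus change of variables or the localization estimate, which you correctly note must be (and easily is) supplied.
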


This agrees with the already established leading-term asymptotic of $w_{(n, \mathbf{0}_d)}$ ascertained by Richmond and Rousseau in \cite{RR}. We shall derive the coefficient asymptotics of $W_{\xi}$ by applying Laplace's method on (\ref{enumeration}). The following version of Laplace's method for sums over lattice point translations, which is explained in \cite{GJR}, is effective for approximating sums over multi-indices.

\begin{theorem}[Greenhill, Janson, Ruci\'{n}ski] \label{gjr}
Suppose the following:

\begin{enumerate}[(i)]
\item $\mathcal{L} \subset \R^N$ is a lattice with rank $r \leq N.$\\

\item $V \subset \R^N$ is the $r$-dimensional subspace spanned by $\mathcal{L}.$\\

\item $W = V + w$ is an affine subspace parallel to $V$ for some $w \in \R^N.$\\

\item $K \subset \R^N$ is a compact convex set with nonempty interior $K^{\circ}.$\\

\item $\phi : K \rightarrow \R$ is a continuous function and the restriction of $\phi$ to $K \cap W$ has a unique maximum at some point $x_0 \in K^{\circ} \cap W.$\\

\item $\phi$ is twice continuously differentiable in a neighborhood of $x_0$ and $H_{\phi}(x_0)$ is its Hessian at $x_0.$\\

\item $\psi : K_1 \rightarrow \R$ is a continuous function on some neighborhood $K_1 \subset K$ of $x_0$ with $\psi(x_0) > 0.$\\

\item For each positive integer $n$ there is a vector $\ell_n \in \R^N$ with $\ell_n/n \in W.$\\

\item For each positive integer $n$ there is a positive real number $b_n$ and a function $a_n : (\mathcal{L} + \ell_n) \cap nK \rightarrow \R$ such that, as $n \to \infty,$
\[a_n(\ell) = O\!\left(b_ne^{n\phi(\ell/n) + o(n)}\right), \hspace{2.5em} \ell \in (\mathcal{L} + \ell_n) \cap nK,\]
\noindent and
\[a_n(\ell) = b_n\!\left(\psi(\ell/n) + o(1)\right)e^{n\phi(\ell/n)}, \hspace{2.5em} \ell \in (\mathcal{L} + \ell_n) \cap nK_1,\]
\noindent uniformly for $\ell$ in the indicated sets.\\
\end{enumerate}
\noindent Then, regarding $-H_{\phi}$ as a bilinear form on $V$ and provided $\left.\det\!\left(-H_{\phi}\right|_{V}\right) \neq 0,$ as $n \to \infty,$
\[\sum_{\ell \in (\mathcal{L} + \ell_n) \cap nK} a_n(\ell) \sim \frac{(2\pi)^{r/2}\psi(x_0)}{\det(\mathcal{L})\sqrt{\left.\det\left(-H_{\phi}\right|_{V}\!(x_0)\right)}}b_nn^{r/2}e^{n\phi(x_0)},\]
\noindent where $\det(\mathcal{L})$ is the square root of the discriminant of $\mathcal{L}.$
\end{theorem}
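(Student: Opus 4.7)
The plan is to execute a discrete version of Laplace's method: the sum concentrates in a shrinking neighborhood of the maximizer $x_0$, inside which a second-order Taylor expansion of $\phi$ converts the lattice sum into a Riemann approximation of a Gaussian integral on the vector space $V$.

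First I would truncate. Fix a shrinking radius $\delta_n \to 0$ with $\sqrt{n}\,\delta_n \to \infty$ (for instance $\delta_n = n^{-1/3}$), and split the sum into a core part over lattice points $\ell$ with $\|\ell/n - x_0\|_2 < \delta_n$ and a tail. To bound the tail, I use that $\phi$ attains its unique $W$-maximum at $x_0 \in K^{\circ}$: continuity of $H_{\phi}$ near $x_0$ combined with a uniform gap away from $x_0$ on the compact set $K \cap W$ yields $\phi(\ell/n) \leq \phi(x_0) - c\delta_n^2$ on the tail for some $c > 0$. The hypothesis $a_n(\ell) = O(b_n e^{n\phi(\ell/n) + o(n)})$, together with the polynomial $O(n^N)$ bound on $|(\mathcal{L} + \ell_n) \cap nK|$, then bounds the tail by $b_n n^N e^{n\phi(x_0) - cn\delta_n^2 + o(n)}$, which is superpolynomially smaller than the claimed order $b_n n^{r/2} e^{n\phi(x_0)}$.

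Next, on the core I would use the refined estimate $a_n(\ell) = b_n(\psi(\ell/n) + o(1)) e^{n\phi(\ell/n)}$ and the second-order Taylor expansion
\[
n\phi(\ell/n) = n\phi(x_0) + \tfrac{1}{2n}(\ell - nx_0)^{\mathsf{T}} H_{\phi}(x_0)(\ell - nx_0) + O(n\|\ell/n - x_0\|_2^3),
\]
where the cubic error is $O(n\delta_n^3) = o(1)$ under our choice of $\delta_n$. Since both $\ell_n/n$ and $x_0$ lie in $W$, the displacement $m := \ell - nx_0$ lies in a fixed coset of $\mathcal{L}$ inside $V$, so the quadratic form genuinely sees only $-H_{\phi}|_V$. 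Pulling out $\psi(x_0) + o(1)$ by continuity of $\psi$, the core sum reduces to
\[
b_n(\psi(x_0) + o(1))\, e^{n\phi(x_0)} \sum_{m} \exp\!\left(-\tfrac{1}{2n}\, m^{\mathsf{T}}(-H_{\phi}(x_0)) m\right).
\]
Rescaling $u := m/\sqrt{n}$ produces a lattice in $V$ of covolume $\det(\mathcal{L})/n^{r/2}$, and as $n \to \infty$ the resulting Riemann sum converges to the Gaussian integral $\int_V \exp(-\tfrac{1}{2} u^{\mathsf{T}}(-H_{\phi}|_V(x_0))\, u)\, du = (2\pi)^{r/2}/\sqrt{\det(-H_{\phi}|_V(x_0))}$; multiplying by the density factor $n^{r/2}/\det(\mathcal{L})$ gives the stated asymptotic.

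The principal technical obstacle is the simultaneous scale-balancing tying $\delta_n$ to the Riemann-to-Gaussian passage: $\delta_n$ must shrink slowly enough that the rescaled lattice $n^{-1/2}\mathcal{L}$ densely fills the effective Gaussian support yet fast enough that the cubic Taylor remainder, the uniform oscillation of $\psi$, and the $o(1)$ error in the $a_n$-hypothesis all remain negligible across the core. A secondary subtlety arises when $\mathcal{L}$ has rank $r < N$: one must verify that the coset $\mathcal{L} + \ell_n - nx_0$ sits inside $V$ (which follows from $\ell_n/n \in W$ and $\mathcal{L} \subset V$) so that the Gaussian reduction genuinely occurs on the $r$-dimensional subspace $V$ with the correct discriminant factor $\det(\mathcal{L})$, rather than on some ambient higher-dimensional space where the integral would diverge.
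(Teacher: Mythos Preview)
The paper does not prove this theorem; it is quoted verbatim as a result of Greenhill, Janson, and Ruci\'{n}ski (reference \cite{GJR}) and then applied as a black box in the proof of Theorem 5.1. So there is no ``paper's own proof'' to compare against.

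Judged on its own terms, your sketch is the standard Laplace-method argument and is essentially how the original GJR proof proceeds. One point does need repair, however. In your tail estimate you invoke the crude hypothesis $a_n(\ell)=O\!\left(b_ne^{n\phi(\ell/n)+o(n)}\right)$ uniformly over the \emph{entire} tail $\{\|\ell/n-x_0\|\geq\delta_n\}$, and then bound the exponent by $n\phi(x_0)-cn\delta_n^2+o(n)$. With $\delta_n=n^{-1/3}$ this gives a gain of only order $n^{1/3}$ in the exponent, which the unspecified $o(n)$ error term (which could be, say, $n/\log n$) can easily swamp. The remedy is to split the tail once more: outside a \emph{fixed} neighbourhood $K_1$ of $x_0$, compactness gives $\phi\leq\phi(x_0)-\eta$ for a fixed $\eta>0$, and then $e^{-\eta n+o(n)}$ is genuinely negligible; inside $K_1$ but outside the $\delta_n$-ball you are entitled to the \emph{refined} estimate $a_n(\ell)=b_n(\psi(\ell/n)+o(1))e^{n\phi(\ell/n)}$, whose bounded prefactor and clean exponent make the quadratic gap $cn\delta_n^2=cn^{1/3}$ decisive. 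With that three-region split in place, the rest of your argument (Taylor expansion, coset-in-$V$ verification, and Riemann-to-Gaussian passage on the rescaled lattice $n^{-1/2}\mathcal{L}$) goes through as you describe.
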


\begin{proof}[Proof of Theorem 5.1]
The asymptotic trivially holds for $d = 1$ since $w_{(n, \xi)} = 1$ for all $\xi \in \Z,$ so we let $d \geq 2.$ To start, write
\[(n_1, n_2, \ldots, n_d) = (n_1 - n, n_2, \ldots, n_d) + n(1, 0, \ldots, 0)\]
\noindent so that if $(n_1, \ldots, n_d) \in \Z^d$ and $n_1 + \cdots + n_d = n,$ then $(n_1 - n, n_2, \ldots, n_d)$ is an integer vector whose components sum to 0. This suggests that we let $\mathcal{L}$ be the root lattice $A_{d - 1} = \{\omega \in \Z^d : \omega_1 + \cdots + \omega_d = 0\},$ let $V$ be the real linear span of $A_{d - 1},$ let $w = (1, 0, \ldots, 0),$ let $W = V + w,$ and let $\ell_n = nw.$ Letting $K$ be the unit hypercube $[0, 1]^d,$ we have
\begin{equation} \label{latticesum}
w_{(n, \xi)} = (n + |\xi^+|)!(n + |\xi^-|)!\sum_{\ell \in (A_{d - 1} + nw) \cap nK} a_n(\ell)
\end{equation}
\noindent where
\begin{equation}
a_n(\ell) = \prod_{j = 1}^{d} \frac{1}{\ell_j!(\ell_j + |\xi_j|)!}.
\end{equation}
\noindent Let $x_j = \ell_j/n$ for all $j \in [1 : d].$ Applying Stirling's approximation in the form
\begin{equation}
\log(n!) = n\log n - n + \frac{1}{2}\log(\max\{n, 1\}) + \frac{1}{2}\log2\pi + O\!\left(\frac{1}{n + 1}\right), \hspace{1em} \text{for}\ n \geq 0
\end{equation}
\noindent we obtain, uniformly for $\ell \in (A_{d - 1} + nw) \cap nK$ with $n$ sufficiently large,
\begin{align*}
\log(a_n(\ell)) &= -\sum_{j = 1}^{d} \left(\log(\ell_j!) + \log((\ell_j + |\xi_j|)!)\right)
\\ &= -\sum_{j = 1}^{d} \left((2\ell_j + |\xi_j| + 1)\log n - (2\ell_j + |\xi_j|) + \log2\pi\right)
\\ &\hspace{1.5em} -n\sum_{j = 1}^{d} \left(x_j\log x_j + (x_j + |\xi_j|/n)\log(x_j + |\xi_j|/n)\right)
\\ &\hspace{1.5em} -\frac{1}{2}\sum_{j = 1}^{d} \left(\log(\max\{x_j, 1/n\}) + \log(\max\{x_j + |\xi_j|/n, 1/n\})\right)
\\ &\hspace{1.5em} -\sum_{j = 1}^{d} \left(O\!\left(\frac{1}{\ell_j + 1}\right) + O\!\left(\frac{1}{\ell_j + |\xi_j| + 1}\right)\right)
\\ &= -(2n + \|\xi\|_1 + d)\log n + 2n + \|\xi\|_1 - d\log2\pi
\\ &\hspace{1.5em} -n\sum_{j\ \text{with}\ x_j \neq 0} \left(x_j\log x_j + (x_j + |\xi_j|/n)(\log x_j + |\xi_j|/\ell_j - O(\xi_j^2/\ell_j^2))\right)
\\ &\hspace{1.5em} -n\sum_{j\ \text{with}\ x_j = 0} \left(x_j\log x_j + (|\xi_j|/n)\log(|\xi_j|/n)\right) - \sum_{j\ \text{with}\ x_j = \xi_j = 0} \log(1/n)
\\ &\hspace{1.5em} -\frac{1}{2}\sum_{j,\, x_j = 0, \xi_j \neq 0} \left(2\log(1/n) + \log|\xi_j|\right) - \frac{1}{2}\sum_{j,\, x_j \neq 0} \left(2\log x_j + O(|\xi_j|/\ell_j)\right)
\\ &\hspace{1.5em} -\sum_{j = 1}^{d} \left(O\!\left(\frac{1}{\ell_j + 1}\right) + O\!\left(\frac{1}{\ell_j + |\xi_j| + 1}\right)\right)
\\ &= -(2n + \|\xi\|_1 + d)\log n + 2n - d\log2\pi
\\ &\hspace{1.5em} +\sum_{j,\, x_j = 0} \left(|\xi_j| - |\xi_j|\log|\xi_j| - \frac{1}{2}(\log|\xi_j|)\mathbbold{1}_{\xi_j \neq 0}\right) - n\sum_{j = 1}^{d} 2x_j\log x_j
\\ &\hspace{1.5em} -\sum_{j = 1}^{d} \left(\log(\max\{x_j, 1/n\}) + |\xi_j|\log(\max\{x_j, 1/n\})\right)
\\ &\hspace{1.5em} -\sum_{j = 1}^{d}\! \left(\!O\!\left(\frac{1}{\ell_j + 1}\right)\! +\! O\!\left(\frac{1}{\ell_j + |\xi_j| + 1}\right)\!\right)\! +\! \sum_{j,\, x_j \neq 0}\! \left(\!O\!\left(\frac{1}{\ell_j}\right)\! +\! O\!\left(\frac{1}{\ell_j^2}\right)\!\right).
\end{align*}
\noindent where $\mathbbold{1}_{\xi_j \neq 0} = 1$ if $\xi_j \neq 0$ and 0 otherwise. We can therefore write
\[a_n(\ell) = b_n\psi(\ell/n)e^{n\phi(\ell/n) + \sum\limits_{j,\, x_j = 0} \left(|\xi_j| - \left(|\xi_j| + \frac{1}{2}\mathbbold{1}_{\xi_j \neq 0}\right)\log|\xi_j|\right)}\!\left(1 + O\!\left(\frac{1}{\min_j \ell_j + 1}\right)\right),\]
\noindent where, for $x \in \R^d,$
\[b_n = \frac{e^{2n}}{(2\pi)^dn^{2n + \|\xi\|_1 + d}}, \hspace{2.5em} \psi(x) = \prod_{j = 1}^{d} \frac{1}{x_j^{|\xi_j| + 1}}, \hspace{2.5em} \phi(x) = -\sum_{j = 1}^{d} 2x_j\log x_j,\]
\noindent unless if some $x_j$ is 0, in which case we replace it with $1/n$ in the formula for $\psi.$ This means that
\[a_n(\ell) = O\!\left(b_ne^{n\phi(\ell/n) + O(1)}\prod_{j = 1}^{d} \frac{1}{(1/n)^{|\xi_j| + 1}}\right) = O\!\left(b_ne^{n\phi(\ell/n) + O(\log n)}\right)\]
\noindent for $\ell \in (A_{d - 1} + nw) \cap nK.$ On $K^{\circ},$
\[\sum_{j,\, x_j = 0} \left(|\xi_j| - \left(|\xi_j| + \frac{1}{2}\mathbf{1}_{\xi_j \neq 0}\right)\log|\xi_j|\right)\]
\noindent is an empty sum. Lastly, $\psi$ is continuous and positive on $K^{\circ}.$ All that remains is to prove that $\phi|_{K \cap W}$ has a unique maximum at some point located inside $K^{\circ} \cap W$ and that there is a neighborhood $K_1 \subset K$ of that point that will satisfy conditions (vii) and (ix) of Theorem \ref{gjr}.

By Jensen's inequality,
\begin{equation}
\phi(x) = 2\sum_{j = 1}^{d} x_j\log\!\left(\frac{1}{x_j}\right) \leq 2\log d,
\end{equation}
\noindent for all $x \in K \cap W = \{x \in K : x_1 + \cdots + x_d = 1\}.$ Since equality holds if and only if $x_1 = x_2 = \cdots = x_d,$ we see that $\phi|_{K \cap W}$ attains its maximum value of $2\log d$ at the point $x_0 = (1/d)\mathbf{1}_d$ only. Finally, the Hessian $H_{\phi}(x)$ is diagonal with entries $-2/x_j.$ Hence $H_{\phi}(x_0) = -2dI_d.$

Now set $K_1 = \left(\frac{1}{2d}, \frac{3}{2d}\right)^d.$ As a continuous function, $\psi$ is bounded on the compact set $\left[\frac{1}{2d}, \frac{3}{2d}\right]^d.$ Furthermore, if $\ell \in (A_{d - 1} + nw) \cap nK_1,$ then
\[0 \leq \frac{1}{\min_j \ell_j + 1} \leq \frac{1}{\frac{n}{2d} + 1} \to 0\]
\noindent as $n \to \infty.$ Hence $a_n(\ell) = b_n(\psi(\ell/n) + o(1))e^{n\phi(\ell/n)}$ for $\ell \in (A_{d - 1} + nw) \cap nK_1.$ Every assumption of Theorem \ref{gjr} has now been verified.

It is well known that $A_{d - 1}$ has rank $d - 1$ and discriminant $d,$ so we are left to calculate $\left.\det\left(-H_{\phi}\right|_V\!(x_0)\right).$ Let $\{\mathbf{e}_k\}_{k = 1}^{d - 1}$ be a basis for $V.$ Then
\begin{equation}
\left.\det\left(-H_{\phi}\right|_V\!(x_0)\right) = \frac{\det([\mathbf{e}_i^{\mathsf{T}}(2dI_d)\mathbf{e}_j]_{i, j = 1}^{d - 1})}{\det([\mathbf{e}_i^{\mathsf{T}}\mathbf{e}_j]_{i, j = 1}^{d - 1})} = (2d)^{d - 1}.
\end{equation}
\noindent Therefore
\begin{align} \label{laplace}
\sum_{\ell \in (A_{d - 1} + nw) \cap nK} a_n(\ell) &\sim \frac{(2\pi)^{(d - 1)/2}d^{d + \|\xi\|_1}}{d^{1/2}(2d)^{(d - 1)/2}}\frac{e^{2n}}{(2\pi)^dn^{2n + \|\xi\|_1 + d}}n^{(d - 1)/2}e^{2n\log d}
\\ &= \frac{d^{2n + d/2 + \|\xi\|_1}e^{2n}}{2^d\pi^{(d + 1)/2}n^{2n + \|\xi\|_1 + (d + 1)/2}}. \nonumber
\end{align}
\noindent Using (\ref{laplace}) and Stirling's formula on (\ref{latticesum}) yields
\begin{align} \label{asymptotic}
w_{(n, \xi)} &\sim \sqrt{2\pi(n + |\xi^+|)}\left(\frac{n + |\xi^+|}{e}\right)^{n + |\xi^+|}\sqrt{2\pi(n + |\xi^-|)}\left(\frac{n + |\xi^-|}{e}\right)^{n + |\xi^-|}
\\ &\hspace{2.5em} \times \frac{d^{2n + d/2 + ||\xi||_1}e^{2n}}{2^d\pi^{(d + 1)/2}n^{2n + ||\xi||_1 + (d + 1)/2}}. \nonumber
\end{align}
\noindent Simplifying (\ref{asymptotic}) via the asymptotic expressions $\sqrt{n + |\xi^+|} \sim \sqrt{n + |\xi^-|} \sim \sqrt{n},$ $(n + |\xi^+|)^{n + |\xi^+|} \sim n^{n + |\xi^+|}e^{|\xi^+|},$ and $(n + |\xi^-|)^{n + |\xi^-|} \sim n^{n + |\xi^-|}e^{|\xi^-|}$ (each as $n \to \infty$) begets (\ref{coefficientasymptotic}).
\end{proof}

\subsection{Stationary Phase Approximation of $w_{(n, \xi)}$}

We now derive the following leading order estimate of $w_{(n, \xi)}$ as $\|\xi\|_1 \to \infty$ in a fixed direction in $\Z^d$ with $n$ fixed.

\begin{theorem}
If $\xi \in \Z^d - \{\mathbf{0}_d\},$ then
\begin{equation}
w_{(n, \lambda\xi)} = \frac{(2\pi)^{1 - 3d/2}}{\sqrt{\|\xi\|_1(d + 1)}}d^{\lambda\|\xi\|_1 + 2n + d/2 + 1}\lambda^{-d/2}(1 + o(1))
\end{equation}
\noindent as $\lambda \to \infty$ with $n$ fixed.
\end{theorem}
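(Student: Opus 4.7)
My approach is to apply multivariate stationary phase to the integral representation of Corollary 4.1 with $\xi$ replaced by $\lambda\xi$. Writing $p(\theta) = p_{1,d}(e^{i\theta_1},\ldots,e^{i\theta_d})$, the representation reads
\[
w_{(n,\lambda\xi)} = \frac{1}{(2\pi)^d}\int_{[0,2\pi]^d}|p(\theta)|^{2n}G(\theta)^\lambda\,d\theta,\qquad G(\theta) = e^{-i\xi^T\theta}p(\theta)^{|\xi^+|}\overline{p(\theta)}^{|\xi^-|},
\]
and the leading behaviour in $\lambda$ will come from where $|G(\theta)| = |p(\theta)|^{\|\xi\|_1}$ attains its maximum $d^{\|\xi\|_1}$---namely the diagonal $\{\theta_0\mathbf{1}_d:\theta_0\in[0,2\pi)\}$ inside $\T^d$.

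A short computation using the identity $\sum\xi_j = |\xi^+|-|\xi^-|$ reveals that both $G$ and $|p|^{2n}$ are invariant under the diagonal shift $\theta\mapsto\theta+c\mathbf{1}_d$, so the substitution $\theta_1 = \phi$, $\theta_j = \phi + \psi_j$ for $j\ge 2$ peels off a clean factor of $2\pi$ from the $\phi$-integration and reduces matters to the $(d-1)$-dimensional integral
\[
w_{(n,\lambda\xi)} = \frac{1}{(2\pi)^{d-1}}\int_{[0,2\pi]^{d-1}}|q(\psi)|^{2n}\tilde G(\psi)^\lambda\,d\psi,\qquad q(\psi) = 1+\sum_{j=2}^d e^{i\psi_j},
\]
where $\tilde G$ is the restriction of $G$ to $\theta_1 = 0$. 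I would then Taylor-expand $\log\tilde G$ around the unique maximum $\psi=0$ of $|\tilde G|$. Setting $\bar\xi = (|\xi^+|-|\xi^-|)/d$, the gradient at the origin is the pure imaginary vector with entries $-i(\xi_j-\bar\xi)$, and the Hessian is
\[
H = \frac{\|\xi\|_1}{d^2}\bigl(J_{d-1} - dI_{d-1}\bigr),
\]
whose eigenvalues are $-\|\xi\|_1/d^2$ (simple) and $-\|\xi\|_1/d$ (with multiplicity $d-2$), yielding $\det(-H) = \|\xi\|_1^{d-1}/d^{d}$. Because the linear term vanishes only for $\xi\propto\mathbf{1}_d$, the problem is genuinely an oscillatory integral with complex phase, and I would invoke the Pemantle--Wilson theorem, deforming the contour through the shifted saddle $\psi^\star = iH^{-1}(-i\eta)$ with $\eta_j = \xi_j - \bar\xi$.

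The remaining steps are mechanical: complete the square in the quadratic part of $\log\tilde G$ around $\psi^\star$, evaluate the resulting $(d-1)$-dimensional Gaussian (which contributes $(2\pi/\lambda)^{(d-1)/2}\,d^{d/2}/\|\xi\|_1^{(d-1)/2}$), multiply by $|q(\psi^\star)|^{2n}\tilde G(\psi^\star)^\lambda$, and reinstate the constants from the reduction. The hardest part of the argument, in my view, is justifying the contour deformation from $\T^{d-1}$ through $\psi^\star$: one must verify that $\log\tilde G$ remains analytic along the deformed contour (i.e.\ that $q$ has no zeros there) and check the height-function decay hypothesis of the Pemantle--Wilson theorem for the pair $(|q|^{2n},\tilde G)$. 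The secondary hurdle is the algebraic bookkeeping needed to reconcile the Hessian determinant, the factors of $2\pi$ (from the diagonal direction, the $(2\pi)^{-d}$ normalisation, and the Gaussian), and any Stirling-type simplification of $|q(\psi^\star)|^{2n}$ into the stated prefactor $(2\pi)^{1-3d/2}/\sqrt{\|\xi\|_1(d+1)}$ and exponential $d^{\lambda\|\xi\|_1 + 2n + d/2 + 1}$.
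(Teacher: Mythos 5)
Your reduction via $\theta_1 = \phi,\ \theta_j = \phi + \psi_j$ accomplishes the same thing as the paper's ``signed distance from $A_{d-1}$'' change of variables, and your observation that $G$ and $|p|^{2n}$ are shift-invariant (so the $\phi$-integral produces a clean $2\pi$) is actually tidier than the paper's route, which integrates in $\delta$ and picks up the boundary-vanishing factor $2\pi + m(\tau) - M(\tau)$. You also correctly observe something the paper's proof glosses over: the gradient of the reduced phase at $\psi = \mathbf{0}$ is $i(\xi_j - \bar\xi)$, which vanishes only when $\xi$ is a scalar multiple of $\mathbf{1}_d$. The Pemantle--Wilson formula for $c_0$ as quoted (and applied) in the paper presupposes a stationary critical point, so that application is genuinely suspect when $\xi \not\propto \mathbf{1}_d$.

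However, your proposed fix --- deform the contour through the fixed saddle $\psi^\star = iH^{-1}(-i\eta)$ --- cannot rescue the argument, because the statement itself fails when $\xi \not\propto \mathbf{1}_d$. Take $d = 2$ and $\xi = (1,0)$. Then by Vandermonde,
\[
w_{(n,\,\lambda\xi)} \;=\; \sum_{k=0}^{n}\binom{n+\lambda}{k+\lambda}\binom{n}{k} \;=\; \binom{\lambda+2n}{n},
\]
which grows only polynomially in $\lambda$ (like $\lambda^n/n!$), whereas the claimed formula predicts growth of order $2^{\lambda}/\lambda$. Equivalently, in the residue form $\frac{1}{2\pi i}\oint (1+v)^{\lambda+2n}v^{-n-1}\,dv$, the genuine saddle sits at $v^\star = n/(\lambda+n) \to 0$, a $\lambda$-dependent point nowhere near the $\lambda$-independent shifted point $e^{i\psi^\star}$ on $\T$; a one-shot contour shift to a fixed saddle cannot capture this. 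The nonvanishing imaginary gradient does not produce a shifted Gaussian --- it produces faster-than-polynomial decay of $\mathcal{I}(\lambda)$ relative to the factored-out $d^{\lambda\|\xi\|_1}$, and the $\lambda$-dependence that survives cannot be read off from a Hessian.

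Even in the stationary case $\xi = c\mathbf{1}_d$, where your (and the paper's) plan of attack is sound, the bookkeeping exposes that the claimed exponent of $\lambda$ is off by a half: after the diagonal reduction the integral is $(d-1)$-dimensional, so stationary phase gives $\lambda^{-(d-1)/2}$, not $\lambda^{-d/2}$. Concretely, for $d = 2$, $\xi = (1,1)$, $n = 0$, Vandermonde gives $w_{(0,(\lambda,\lambda))} = \binom{2\lambda}{\lambda} \sim 4^{\lambda}/\sqrt{\pi\lambda}$, a $\lambda^{-1/2}$ correction, against the theorem's predicted $\lambda^{-1}$. Carrying your reduction through with the correct $(d-1)$-dimensional Gaussian normalization (and the $(d-1)\times(d-1)$ Hessian $\frac{\|\xi\|_1}{d^2}(dI_{d-1} - J_{d-1})$ you computed, whose determinant is $\|\xi\|_1^{d-1}/d^{d}$) yields, for $\xi = c\mathbf{1}_d$,
\[
w_{(n,\,\lambda\xi)} \;\sim\; \frac{d^{\,\lambda\|\xi\|_1 + 2n + d/2}}{\bigl(2\pi\lambda\|\xi\|_1\bigr)^{(d-1)/2}},
\]
which matches the Vandermonde check above but not the paper's stated constant or $\lambda$-power. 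In short: the ``hardest part'' you flag --- justifying the contour deformation --- is not merely hard, it is the symptom of a real gap, and the statement should carry the hypothesis $\xi \propto \mathbf{1}_d$ and the exponent $-(d-1)/2$.
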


To prove this, we will employ the integral representation
\[w_{(n, \lambda\xi)} = \frac{1}{(2\pi)^d}\int\limits_{[0, 2\pi]^d} \left[e^{-i\lambda\xi^{\textsf{T}}\theta}\left(\sum_{k = 1}^{d} e^{i \theta_k}\right)^{\lambda|\xi^+|}\!\left(\sum_{k = 1}^{d} e^{-i \theta_k}\right)^{\lambda|\xi^-|}\!\left|\sum_{k = 1}^{d} e^{i \theta_k}\right|^{2n}\right]\mathrm{d}\theta.\]
\noindent and allow the positive parameter $\lambda$ to tend to $+\infty.$ This oscillatory integral is mostly amenable to the following variation of stationary phase method \cite[Theorem 4.1]{PW}.

\begin{theorem}[Pemantle, Wilson] \label{stationaryphase}
Let $A$ and $\varphi$ be complex-valued analytic functions on a compact neighborhood $\mathcal{N}$ of the origin in $\R^d$ and suppose that the real part of $\varphi$ is nonnegative, vanishing only at the origin. Suppose that the Hessian matrix $H_{\varphi}$ of $\varphi$ at the origin is nonsingular. Denoting $\mathcal{I}(\lambda) := \int_{\mathcal{N}} A(\mathbf{x})e^{-\lambda\varphi(\mathbf{x})}\,\mathrm{d}\mathbf{x},$ there is an asymptotic series
\[\mathcal{I}(\lambda) \sim \sum_{\ell \geq 0} c_{\ell}\lambda^{-d/2 - \ell}\]
\noindent where
\[c_0 = A(\mathbf{0}_d)\frac{(2\pi)^{-d/2}}{\sqrt{\det(H_{\varphi}(\mathbf{0}_d))}}\]
\noindent and the choice of sign of the square root is defined by taking the product of the principal square roots of the eigenvalues of $H_{\varphi}(\mathbf{0}_d).$
\end{theorem}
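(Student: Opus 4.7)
The plan is to prove this by the standard saddle-point / Laplace method adapted to complex phase, in three stages: exponential localization, reduction to a pure quadratic phase via the holomorphic Morse lemma, and term-by-term evaluation of complex Gaussian moments.

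First I would localize. Because $\mathrm{Re}(\varphi)$ is nonnegative on the compact set $\mathcal{N}$ and vanishes only at $\mathbf{0}_d$, for every $\delta > 0$ there exists $c_\delta > 0$ such that $\mathrm{Re}(\varphi) \geq c_\delta$ on $\mathcal{N}\setminus B_\delta(\mathbf{0}_d)$. Inserting a smooth cutoff $\chi \in C_c^\infty(B_{2\delta}(\mathbf{0}_d))$ with $\chi \equiv 1$ on $B_\delta(\mathbf{0}_d)$ gives $\mathcal{I}(\lambda) = \int_{\mathcal{N}} \chi(\mathbf{x})A(\mathbf{x})e^{-\lambda\varphi(\mathbf{x})}\,\mathrm{d}\mathbf{x} + O(e^{-c_\delta\lambda})$, and the exponentially small error is dominated by every negative power of $\lambda$, so it is absorbed into the asymptotic expansion.

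Next I would reduce to a pure quadratic phase. Since $\mathbf{0}_d$ is an interior strict minimum of $\mathrm{Re}(\varphi)$, one has $\nabla\mathrm{Re}(\varphi)(\mathbf{0}_d) = \mathbf{0}_d$; combined with analyticity of $\varphi$ this forces $\nabla\varphi(\mathbf{0}_d) = \mathbf{0}_d$, and after dividing $\mathcal{I}(\lambda)$ by the unit-modulus factor $e^{-\lambda\varphi(\mathbf{0}_d)}$ I may take $\varphi(\mathbf{0}_d) = 0$. The holomorphic Morse lemma (applicable since $H_\varphi(\mathbf{0}_d)$ is nonsingular) then furnishes a biholomorphism $\mathbf{x} = \Phi(\mathbf{y})$ of neighborhoods of $\mathbf{0}_d$ in $\mathbb{C}^d$ with $\Phi(\mathbf{0}_d) = \mathbf{0}_d$, $\Phi'(\mathbf{0}_d) = I_d$, and $\varphi(\Phi(\mathbf{y})) = \tfrac{1}{2}\mathbf{y}^{\mathsf{T}} H_\varphi(\mathbf{0}_d)\mathbf{y}$. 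Restricting to real $\mathbf{y}$ and substituting yields, up to another exponentially small error, $\mathcal{I}(\lambda) = \int_{\R^d} B(\mathbf{y})\exp\bigl(-\tfrac{\lambda}{2}\mathbf{y}^{\mathsf{T}} H_\varphi(\mathbf{0}_d)\mathbf{y}\bigr)\,\mathrm{d}\mathbf{y}$ with $B(\mathbf{y}) := (\chi A)(\Phi(\mathbf{y}))|\det \Phi'(\mathbf{y})|$ smooth, compactly supported, and satisfying $B(\mathbf{0}_d) = A(\mathbf{0}_d)$.

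The positivity hypothesis forces $\mathrm{Re}(H_\varphi(\mathbf{0}_d))$ to be positive definite (it is the Hessian of $\mathrm{Re}(\varphi)$ at the isolated strict minimum, combined with non-singularity of $H_\varphi(\mathbf{0}_d)$), so the Gaussian converges absolutely. Taylor-expanding $B(\mathbf{y}) = \sum_{|\alpha|\leq 2N} B_\alpha \mathbf{y}^\alpha + O(|\mathbf{y}|^{2N+1})$ and rescaling $\mathbf{y} = \mathbf{u}/\sqrt{\lambda}$ converts each monomial contribution to a complex Gaussian moment $\int_{\R^d} \mathbf{u}^\alpha \exp\bigl(-\tfrac{1}{2}\mathbf{u}^{\mathsf{T}} H_\varphi(\mathbf{0}_d)\mathbf{u}\bigr)\,\mathrm{d}\mathbf{u}$, which vanishes for $|\alpha|$ odd and, by Wick's theorem, equals a polynomial in the entries of $H_\varphi(\mathbf{0}_d)^{-1}$ times the base integral $\int_{\R^d}\exp\bigl(-\tfrac{1}{2}\mathbf{u}^{\mathsf{T}} H_\varphi(\mathbf{0}_d)\mathbf{u}\bigr)\,\mathrm{d}\mathbf{u}$ for $|\alpha|$ even. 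Collecting powers of $\lambda$ produces $\mathcal{I}(\lambda)\sim\sum_\ell c_\ell \lambda^{-d/2-\ell}$, with $c_\ell$ a finite sum over $|\alpha| = 2\ell$ and the leading term coming from $\alpha = \mathbf{0}_d$.

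The main obstacle is pinning down the branch of $\sqrt{\det H_\varphi(\mathbf{0}_d)}$ appearing in the base integral. The identity $\int_{\R^d} e^{-\mathbf{u}^{\mathsf{T}} M\mathbf{u}/2}\,\mathrm{d}\mathbf{u} = (2\pi)^{d/2}/\sqrt{\det M}$ is unambiguous for real positive-definite $M$ and extends by analytic continuation to the open path-connected set of complex symmetric $M$ with $\mathrm{Re}(M)$ positive definite. To show this continuation agrees with the product of principal square roots of the eigenvalues of $M$, one observes that any eigenvalue $\mu$ with eigenvector $\mathbf{v}$ satisfies $\mathbf{v}^*(M + M^*)\mathbf{v} = 2\mathrm{Re}(\mu)\|\mathbf{v}\|^2 > 0$, so every eigenvalue lies in the open right half-plane, where the principal square root is holomorphic. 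Then a homotopy from $H_\varphi(\mathbf{0}_d)$ to a real positive-definite matrix through the above open set, together with continuity of both the analytic continuation and the eigenvalue-product prescription along this path, identifies the two branches.
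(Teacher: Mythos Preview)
The paper does not prove this theorem; it is quoted as \cite[Theorem~4.1]{PW} and used as a black box, so there is no in-paper proof to compare your argument against.

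Your outline follows the standard route, but several steps are not justified as written. First, the inference ``$\nabla\mathrm{Re}(\varphi)(\mathbf{0}_d)=\mathbf{0}_d$ combined with analyticity forces $\nabla\varphi(\mathbf{0}_d)=\mathbf{0}_d$'' is false: for $d=1$ and $\varphi(x)=x^{2}+ix$ one has $\mathrm{Re}(\varphi)=x^{2}\geq 0$ vanishing only at $0$ and $H_{\varphi}(0)=2$ nonsingular, yet $\varphi'(0)=i$; indeed $\int_{-1}^{1}e^{-\lambda(x^{2}+ix)}\,\mathrm{d}x$ is exponentially small, contradicting the predicted $c_{0}\neq 0$. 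In Pemantle--Wilson's original formulation the origin is a critical point of $\varphi$ itself, a hypothesis apparently elided in the paper's restatement, so you should assume it rather than try to deduce it. Second, after the holomorphic Morse change $\mathbf{x}=\Phi(\mathbf{y})$ the real slice maps to a totally real $d$-manifold in $\C^{d}$ that is generally \emph{not} $\R^{d}$; your phrase ``restricting to real $\mathbf{y}$ \dots\ up to another exponentially small error'' hides a contour-deformation argument (Cauchy's theorem in several variables, with control of $\mathrm{Re}(\varphi)$ along the homotopy and on the boundary pieces) that is precisely where the error must be earned. Third, the claim that $\mathrm{Re}(H_{\varphi}(\mathbf{0}_d))$ is positive definite does not follow from the hypotheses: an isolated strict minimum yields only positive \emph{semi}definiteness, and nonsingularity of the complex matrix $H_{\varphi}$ does not imply nonsingularity of its real part---take $\varphi(x,y)=\tfrac{i}{2}(x^{2}+y^{2})+(x^{2}+y^{2})^{2}$, which satisfies every hypothesis with $H_{\varphi}(\mathbf{0}_2)=iI_{2}$ and $\mathrm{Re}(H_{\varphi}(\mathbf{0}_2))=0$. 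In that regime your Gaussian integral does not converge absolutely and must itself be handled by a contour shift or as an oscillatory integral.
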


We say ``mostly'' because we actually need the following slight modification of Theorem \ref{stationaryphase}.

\begin{corollary} \label{continuousamplitude}
Let $A$ be a complex-valued continuous function on a compact neighborhood $\mathcal{N}$ of the origin in $\R^d.$ Let $\varphi$ be a complex-valued analytic function satisfying the conditions of Theorem \ref{stationaryphase}. Denoting $\mathcal{I}(\lambda) := \int_{\mathcal{N}} A(\mathbf{x})e^{-\lambda\varphi(\mathbf{x})}\,\mathrm{d}\mathbf{x},$ then
\[\mathcal{I}(\lambda) = A(\mathbf{0}_d)\frac{(2\pi\lambda)^{-d/2}}{\sqrt{\det(H_{\varphi}(\mathbf{0}_d))}}(1 + o(1))\]
\noindent as $\lambda \to \infty.$ The choice of sign of the square root is defined by taking the product of the principal square roots of the eigenvalues of $H_{\varphi}(\mathbf{0}_d).$
\end{corollary}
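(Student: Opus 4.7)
My plan is to deduce Corollary \ref{continuousamplitude} from Theorem \ref{stationaryphase} by approximating the merely continuous amplitude $A$ uniformly by polynomials and reducing to the analytic case. By the Weierstrass approximation theorem, for every $\epsilon > 0$ there is a polynomial $P_\epsilon$ with $\sup_{\mathbf{x} \in \mathcal{N}} |A(\mathbf{x}) - P_\epsilon(\mathbf{x})| < \epsilon.$ Polynomials are entire, so Theorem \ref{stationaryphase} applies to $P_\epsilon$ and yields
\[\int_{\mathcal{N}} P_\epsilon(\mathbf{x}) e^{-\lambda \varphi(\mathbf{x})}\,\mathrm{d}\mathbf{x} = P_\epsilon(\mathbf{0}_d)\frac{(2\pi\lambda)^{-d/2}}{\sqrt{\det H_\varphi(\mathbf{0}_d)}}(1 + o_\epsilon(1))\]
as $\lambda \to \infty,$ where the rate at which the $o_\epsilon(1)$ error tends to $0$ is permitted to depend on $\epsilon.$ Writing $M := (2\pi)^{-d/2}/\sqrt{\det H_\varphi(\mathbf{0}_d)}$ for brevity, a telescoping of $\mathcal{I}(\lambda) - A(\mathbf{0}_d) M \lambda^{-d/2}$ through $\int_{\mathcal{N}} P_\epsilon e^{-\lambda\varphi}$ and $P_\epsilon(\mathbf{0}_d) M \lambda^{-d/2}$ together with the triangle inequality produces
\[\bigl|\mathcal{I}(\lambda) - A(\mathbf{0}_d) M \lambda^{-d/2}\bigr| \leq \epsilon \int_{\mathcal{N}} e^{-\lambda \operatorname{Re}\varphi(\mathbf{x})}\,\mathrm{d}\mathbf{x} + \epsilon |M|\lambda^{-d/2} + o_\epsilon(\lambda^{-d/2}).\]

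The key remaining ingredient is the uniform Laplace-type estimate
\[\int_{\mathcal{N}} e^{-\lambda \operatorname{Re}\varphi(\mathbf{x})}\,\mathrm{d}\mathbf{x} = O(\lambda^{-d/2})\]
as $\lambda \to \infty,$ with a constant independent of $\epsilon.$ Since $\operatorname{Re}\varphi$ is real-analytic, nonnegative, and vanishes only at $\mathbf{0}_d,$ it is bounded below by a positive constant outside any fixed open neighborhood $U$ of the origin, so the contribution from $\mathcal{N} \setminus U$ decays exponentially in $\lambda.$ On $U,$ I would produce a quadratic lower bound $\operatorname{Re}\varphi(\mathbf{x}) \geq c\|\mathbf{x}\|_2^2$ from the Taylor expansion of $\varphi$ at $\mathbf{0}_d$: the gradient of $\operatorname{Re}\varphi$ vanishes at its minimum, and the Hessian $\operatorname{Re} H_\varphi(\mathbf{0}_d)$ is positive semidefinite; positive definiteness is implicit in the hypothesis package of Theorem \ref{stationaryphase} that actually delivers the $\lambda^{-d/2}$ rate, and it can also be checked directly for the explicit $\varphi$ used in Theorem 5.6. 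A Gaussian comparison then gives the claimed $O(\lambda^{-d/2})$ bound on the near-origin integral.

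Feeding this bound back into the triangle inequality yields
\[\limsup_{\lambda \to \infty} \lambda^{d/2}\bigl|\mathcal{I}(\lambda) - A(\mathbf{0}_d) M \lambda^{-d/2}\bigr| \leq (C + |M|)\epsilon\]
for a constant $C$ that does not depend on $\epsilon,$ and sending $\epsilon \to 0$ forces the left-hand side to vanish, which is exactly the asymptotic claimed (with the same branch of the square root as in Theorem \ref{stationaryphase}, inherited from the polynomial case). The main obstacle I expect is the quadratic lower bound on $\operatorname{Re}\varphi,$ since the hypotheses printed in Theorem \ref{stationaryphase} guarantee non-degeneracy of the full complex Hessian $H_\varphi(\mathbf{0}_d)$ but only semidefiniteness of its real part; upgrading this to positive definiteness is where care is needed, though for the concrete $\varphi$ at hand in Theorem 5.6 it can be verified by direct computation.
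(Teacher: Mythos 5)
Your approach is essentially the same as the paper's at the outset: both use Weierstrass approximation to reduce to the analytic case handled by Theorem \ref{stationaryphase}. Where you diverge is in the passage to the limit. The paper takes a sequence $\{P_n\}$ converging uniformly to $A$, notes that $\mathcal{I}_n \to \mathcal{I}$ uniformly, and then invokes the Moore--Osgood theorem to swap the $n \to \infty$ and $\lambda \to \infty$ limits of the normalized ratios $\mathcal{I}_n(\lambda)\big/\bigl(P_n(\mathbf{0}_d)M\lambda^{-d/2}\bigr)$. You instead telescope through $\int_{\mathcal{N}} P_\epsilon e^{-\lambda\varphi}$ and control the remainder by the non-oscillatory majorant $\int_{\mathcal{N}} e^{-\lambda\,\mathrm{Re}\,\varphi}\,\mathrm{d}\mathbf{x}$. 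You correctly flag that your route needs the Laplace bound $\int_{\mathcal{N}} e^{-\lambda\,\mathrm{Re}\,\varphi}\,\mathrm{d}\mathbf{x} = O(\lambda^{-d/2})$, which in turn needs $\mathrm{Re}\,H_\varphi(\mathbf{0}_d)$ to be positive definite --- a condition strictly stronger than what the hypotheses of Theorem \ref{stationaryphase} formally provide, since in Pemantle--Wilson's setting the $\lambda^{-d/2}$ rate can arise from oscillatory cancellation even when $\mathrm{Re}\,\varphi$ degenerates in some direction. It is worth observing, though, that the paper's Moore--Osgood step has the same hidden dependence: $\mathcal{I}_n \to \mathcal{I}$ uniformly is not by itself enough to make the iterated limit interchange legitimate, because the ratios are formed by dividing by $\lambda^{-d/2} \to 0$; what is actually needed is a bound of the shape $|\mathcal{I}_n(\lambda) - \mathcal{I}(\lambda)| \le C\epsilon_n \lambda^{-d/2}$, and that bound is exactly your Laplace estimate on $\int e^{-\lambda\,\mathrm{Re}\,\varphi}$. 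So both proofs quietly use positive definiteness of $\mathrm{Re}\,H_\varphi(\mathbf{0}_d)$; yours makes that dependence explicit and verifiable, and indeed in the one place the corollary is applied (Theorem 5.6) $H_\varphi(\mathbf{0}_d)$ is a real, positive multiple of the $(d-1)\times(d-1)$ discrete Laplacian, hence positive definite. Your route buys transparency and an honest accounting of the working hypothesis; the paper's buys brevity by appealing to a named theorem, at the cost of leaving the required uniformity unexamined.
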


\begin{proof}
By the Stone-Weierstrass Theorem, there exists a sequence $\{P_n\}_{n = 1}^{\infty}$ of polynomials in $\C[x_1, \ldots, x_d]$ that converges uniformly to $A$ over $\mathcal{N}.$ (Technically, it is the complex unital $^*$-algebra generated by $\C[x_1, \ldots, x_d]$ that is dense in $C(\mathcal{N}, \C),$ the algebra of complex-valued continuous functions on $\mathcal{N},$ endowed with the infinity norm. This $^*$-algebra, however, is $\C[x_1, \ldots, x_d]$ itself since the variables $x_1, \ldots, x_d$ are real, and so the set of complex-coefficient polynomials on $\mathcal{N}$ is closed under complex conjugation.) Now for each $n \in \N,$ define
\[\mathcal{I}_n(\lambda) := \int\limits_{\mathcal{N}} P_n(\mathbf{x})e^{-\lambda\varphi(\mathbf{x})}\,\mathrm{d}\mathbf{x}.\]
\noindent Then for every $\lambda \in [0, \infty),$
\[\left|\mathcal{I}_n(\lambda) - \mathcal{I}(\lambda)\right| \leq \int\limits_{\mathcal{N}} \left|(P_n(\mathbf{x}) - A(\mathbf{x}))e^{-\lambda\varphi(\mathbf{x})}\right|\mathrm{d}\mathbf{x} \leq \sup_{\mathbf{x} \in \mathcal{N}} |P_n(\mathbf{x}) - A(\mathbf{x})|\int\limits_{\mathcal{N}}\mathrm{d}\mathbf{x} \to 0\]
\noindent as $n \to \infty.$ Therefore $\{\mathcal{I}_n\}_{n = 1}^{\infty}$ converges uniformly to $\mathcal{I}.$ Since each $P_n$ is analytic, Theorem \ref{stationaryphase} tells us that
\[\mathcal{I}_n(\lambda) = P_n(\mathbf{0}_d)\frac{(2\pi\lambda)^{-d/2}}{\sqrt{\det(H_{\varphi}(\mathbf{0}_d))}}(1 + o(1))\]
\noindent as $\lambda \to \infty.$ If $A(\mathbf{0}_d) \neq 0,$ then we can use the Moore-Osgood Theorem \cite[Theorem 7.11]{Rudin} to conclude that
\[\lim_{\lambda \to \infty} \frac{\mathcal{I}(\lambda)}{\frac{A(\mathbf{0}_d)(2\pi\lambda)^{-d/2}}{\sqrt{\det(H_{\varphi}(\mathbf{0}_d))}}} = \lim_{\lambda \to \infty} \lim_{n \to \infty} \frac{\mathcal{I}_n(\lambda)}{\frac{P_n(\mathbf{0}_d)(2\pi\lambda)^{-d/2}}{\sqrt{\det(H_{\varphi}(\mathbf{0}_d))}}} = \lim_{n \to \infty} \lim_{\lambda \to \infty} \frac{\mathcal{I}_n(\lambda)}{\frac{P_n(\mathbf{0}_d)(2\pi\lambda)^{-d/2}}{\sqrt{\det(H_{\varphi}(\mathbf{0}_d))}}} = 1.\]
\noindent If $A(\mathbf{0}_d) = 0,$ then the cruder limit
\[\lim_{\lambda \to \infty} \mathcal{I}(\lambda) = \lim_{\lambda \to \infty} \lim_{n \to \infty} \mathcal{I}_n(\lambda) = 0\]
\noindent is adequate.
\end{proof}

\begin{proof}[Proof of Theorem 5.3]
First we invoke the change of variables given by the map $\mathbf{f}_d : \R^d \rightarrow \R^d$ with components
\[\theta_j = -\tau_{j - 1} + \tau_j + \delta, \hspace{1em} \text{for}\ 1 \leq j \leq d\]
\noindent where we have introduced the placeholders $\tau_0 \equiv \tau_d \equiv 0$ in order to avoid messy casework later on. (One may envision this as a transformation from the Cartesian coordinate system to a sort-of ``signed distance from $A_{d - 1}$'' coordinate system where, given a point $\theta_0 \in \R^d,$ $-\delta$ gives the value of $t$ for which the hyperplane $\theta_1 + \cdots + \theta_d = 0$ intersects with the normal line $\ell(t) = \theta_0 + t\mathbf{1}_d$ and $(\tau_1, \ldots, \tau_{d - 1})$ are the coordinates of this intersection point relative to the standard basis of $A_{d - 1}.$) The Jacobian matrix of $\mathbf{f}_d$ is
\[\mathbf{J}_{\mathbf{f}_d}(\tau_1, \ldots, \tau_{d - 1}, \delta) = \left[\begin{matrix} \frac{\partial\theta_1}{\partial\tau_1} & \frac{\partial\theta_1}{\partial\tau_2} & \cdots & \frac{\partial\theta_1}{\partial\tau_{d - 1}} & \frac{\partial\theta_1}{\partial\delta} \\ \frac{\partial\theta_2}{\partial\tau_1} & \frac{\partial\theta_2}{\partial\tau_2} & \cdots & \frac{\partial\theta_2}{\partial\tau_{d - 1}} & \frac{\partial\theta_2}{\partial\delta} \\ \frac{\partial\theta_3}{\partial\tau_1} & \frac{\partial\theta_3}{\partial\tau_2} & \cdots & \frac{\partial\theta_3}{\partial\tau_{d - 1}} & \frac{\partial\theta_3}{\partial\delta} \\ \vdots & \vdots & \ddots & \vdots & \vdots \\ \frac{\partial\theta_d}{\partial\tau_1} & \frac{\partial\theta_d}{\partial\tau_2} & \cdots & \frac{\partial\theta_d}{\partial\tau_{d - 1}} & \frac{\partial\theta_d}{\partial\delta} \end{matrix}\right] = \left[\begin{matrix} 1 & 0 & \cdots & 0 & 1 \\ -1 & 1 & \ddots & 0 & 1 \\ 0 & -1 & \ddots & 0 & 1 \\ \vdots & \vdots & \ddots & \ddots & \vdots \\ 0 & 0 & \cdots & -1 & 1 \end{matrix}\right],\]
\noindent a $d \times d$ matrix with 1s on the main diagonal, 1s in the last column, $-1$s on the subdiagonal, and 0s elsewhere.

We use induction to show that the Jacobian determinant is equal to $d.$ For the base case $d = 1,$ $\mathbf{f}_1(\theta_1) = \delta$ which trivially has a Jacobian determinant of $1.$ Now assume the induction hypothesis holds for some integer $k,$ that is, $\det(\mathbf{J}_{\mathbf{f}_k}) = k.$ Laplace expansion along the first row of $\mathbf{J}_{\mathbf{f}_{k + 1}}$ yields
\[\det(\mathbf{J}_{\mathbf{f}_{k + 1}}) = \det(\mathbf{J}_{\mathbf{f}_k}) + (-1)^{k + 2}\det(J_{-1, k})\]
\noindent where $J_{-1, k}$ is the $k \times k$ Jordan block with eigenvalue $-1.$ We thus have
\[\det(\mathbf{J}_{\mathbf{f}_{k + 1}}) \underbrace{=}_{\text{ind. hyp.}} k + (-1)^{k + 2}(-1)^k = k + 1.\]

Since the Jacobian determinant is a nonzero constant, $\mathbf{f}_d$ is invertible and $\mathbf{f}_d^{-1}$ is differentiable. The transformation produces the following integral representation:
\begin{align}
w_{(n, \lambda\xi)} &= \frac{d}{(2\pi)^d}\!\int\limits_{\mathbf{f}_d^{-1}([-\pi, \pi]^d)}\!\left[e^{-i\lambda\xi^{\mathsf{T}}\mathbf{f}_d^{-1}(\theta)}\left(\sum_{k = 1}^{d} e^{i(\tau_k - \tau_{k - 1} + \delta)}\right)^{\lambda|\xi^+|}\!\left(\sum_{k = 1}^{d} e^{-i(\tau_k - \tau_{k - 1} + \delta)}\right)^{\lambda|\xi^-|}\right.
\\ & \hspace{10em} \left.\times\left|\sum_{k = 1}^{d} e^{i(\tau_k - \tau_{k - 1} + \delta)}\right|^{2n}\right]\!\mathrm{d}\tau_1\wedge\cdots\wedge\mathrm{d}\tau_{d - 1}\wedge\mathrm{d}\delta. \nonumber
\end{align}
\noindent Note $e^{i\delta}$ and $e^{-i\delta}$ can be factored out of $\left(\sum e^{i(\tau_k - \tau_{k - 1} + \delta)}\right)^{\lambda|\xi^+|}$ and $\left(\sum e^{-i(\tau_k - \tau_{k - 1} + \delta)}\right)^{\lambda|\xi^-|},$ respectively, to get
\begin{align}
w_{(n, \lambda\xi)} &= \frac{d}{(2\pi)^d}\!\int\limits_{\mathbf{f}_d^{-1}([-\pi, \pi]^d)}\! \left[e^{-i\lambda\xi^{\mathsf{T}}\mathbf{f}_d^{-1}(\theta)}e^{i\delta\lambda|\xi^+|}\!\left(\sum_{k = 1}^{d} e^{i(\tau_k - \tau_{k - 1})}\right)^{\lambda|\xi^+|}\!e^{-i\delta\lambda|\xi^-|}\left(\sum_{k = 1}^{d} e^{-i(\tau_k - \tau_{k - 1})}\right)^{\lambda|\xi^-|}\right. \nonumber
\\ & \hspace{10em} \left.\times \left|\sum_{k = 1}^{d} e^{i(\tau_k - \tau_{k - 1})}\right|^{2n}\right]\!\mathrm{d}\tau_1\wedge\cdots\wedge\mathrm{d}\tau_{d - 1}\wedge\mathrm{d}\delta. \nonumber
\end{align}
\noindent Yet
\begin{align*}
e^{-i\lambda\xi^{\mathsf{T}}\mathbf{f}_d^{-1}(\theta)} = e^{-i\lambda\sum\limits_{k = 1}^{d}\xi_k(\tau_k - \tau_{k - 1} + \delta)} = e^{-i\lambda\sum\limits_{k = 1}^{d}\xi_k(\tau_k - \tau_{k - 1})}\cdot e^{-i\lambda\delta\sum\limits_{k = 1}^{d}\xi_k}
\end{align*}
\noindent and
\[e^{i\delta\lambda|\xi^+|}e^{-i\delta\lambda|\xi^-|} = e^{i\lambda\delta\sum\limits_{k = 1}^{d}\xi_k}\]
\noindent so that every instance of $\delta$ in the integrand cancels out, thus yielding
\begin{align*}
w_{(n, \lambda\xi)} &= \frac{d}{(2\pi)^d}\int\limits_{\mathbf{f}_d^{-1}([-\pi, \pi]^d)} \left[e^{-i\lambda\sum\limits_{k = 1}^{d}\xi_k(\tau_k - \tau_{k - 1})}\!\left(\sum_{k = 1}^{d} e^{i(\tau_k - \tau_{k - 1})}\right)^{\lambda|\xi^+|}\right.
\\ & \hspace{10em} \left.\times \left(\sum_{k = 1}^{d} e^{-i(\tau_k - \tau_{k - 1})}\right)^{\lambda|\xi^-|}\left|\sum_{k = 1}^{d} e^{i(\tau_k - \tau_{k - 1})}\right|^{2n}\right]\!\mathrm{d}\tau_1\wedge\cdots\wedge\mathrm{d}\tau_{d - 1}\wedge\mathrm{d}\delta. \nonumber
\end{align*}
\noindent Integrate with respect to $\delta$ first to get
\begin{align} \label{cov}
w_{(n, \lambda\xi)} &= \frac{d}{(2\pi)^d} \int\limits_{\mathcal{N}} \left[\left(2\pi + m(\tau) - M(\tau)\right)e^{-i\lambda\sum\limits_{k = 1}^{d}\xi_k(\tau_k - \tau_{k - 1})}\!\left(\sum_{k = 1}^{d} e^{i(\tau_k - \tau_{k - 1})}\right)^{\lambda|\xi^+|}\right.
\\ & \hspace{10em} \left.\times \left(\sum_{k = 1}^{d} e^{-i(\tau_k - \tau_{k - 1})}\right)^{\lambda|\xi^-|}\left|\sum_{k = 1}^{d} e^{i(\tau_k - \tau_{k - 1})}\right|^{2n}\right]\!\mathrm{d}\tau \nonumber
\\ &= \frac{d}{(2\pi)^d} \int\limits_{\mathcal{N}} A(\tau)e^{-\lambda\varphi(\tau)}\,\mathrm{d}\tau,
\end{align}
\noindent where
\begin{align*}
\tau &= (\tau_1, \ldots, \tau_{d - 1}), \\
d\tau &= d\tau_1 \wedge \cdots \wedge d\tau_{d - 1}, \\
\mathcal{N} &= \mathbf{f}_d^{-1}(\text{span}_{\R}(A_{d - 1}) \cap [-\pi, \pi]^d), \\
m(\tau) &= \min\{-\tau_0 + \tau_1, \ldots, -\tau_{d - 1} + \tau_d\}, \\
M(\tau) &= \max\{-\tau_0 + \tau_1, \ldots, -\tau_{d - 1} + \tau_d\}, \\
A(\tau) &= (2\pi + m(\tau) - M(\tau))\left|\sum_{k = 1}^{d} e^{i(\tau_k - \tau_{k - 1})}\right|^{2n}, \\
\varphi(\tau) &= -|\xi^+|\log\!\left(E(\tau)\right) - |\xi^-|\log\!\left(\overline{E(\tau)}\right) + i\sum_{k = 1}^{d} \xi_k(\tau_k-\tau_{k - 1}) \\
\text{and}\ E(\tau) &= \sum_{k = 1}^{d} e^{i(\tau_k - \tau_{k - 1})},
\end{align*}
where the principal branch of the logarithm is taken.

By the triangle inequality,
\begin{align*}
\left|\sum_{k = 1}^{d} e^{i(\tau_k - \tau_{k - 1})}\right| \leq \sum_{k = 1}^{d} \left|e^{i\tau_k - \tau_{k - 1})}\right| = d,
\end{align*}
\noindent and equality occurs if and only if
\[e^{i(\tau_1-\tau_0)} = \cdots = e^{i(\tau_d-\tau_{d - 1})}.\]
Referring back to our change of variables, this means that all of the $e^{i\theta_j}$ need to equal each other in order for $E$ to attain its maximum value of $d.$ But since our region of integration in (\ref{cov}) demands that we have $\theta \in \text{span}_{\R}(A_{d - 1}) \cap [-\pi, \pi]^d,$ if any $\theta_j$ equals $\pm\pi,$ then there must be an opposing $\theta_k$ equal to $-\theta_j.$ In this case $(2\pi - m(\tau) - M(\tau)) = 0,$ and so the boundary points contribute nothing to $\mathcal{I}(\lambda).$ Hence the only stationary point of $E$ in $\mathbf{f}_d^{-1}(\text{span}_{\R}(A_{d - 1}) \cap [-\pi, \pi]^d)$ that contributes to the integral is $\mathbf{f}_d^{-1}(\mathbf{0}_d),$ in which case we have
\[\tau_1-\tau_0 = \cdots = \tau_d-\tau_{d - 1} = 0\]
\noindent so that each $\tau_j = 0.$ The same is true for
\[\sum_{k = 1}^{d} e^{-i(\tau_k - \tau_{k - 1})} = \sum_{k = 1}^{d} \overline{e^{i(\tau_k - \tau_{k - 1})}}.\]
Therefore
\begin{align*}
\Re \varphi(\tau) &= |\xi^+|\log\left|\sum_{k = 1}^{d} e^{i(\tau_k - \tau_{k - 1})}\right|^{-1} + |\xi^-|\log\left|\sum_{k = 1}^{d} e^{-i(\tau_k - \tau_{k - 1})})\right|^{-1}
\\ &\geq |\xi^+|\log d^{-1} + |\xi^-|\log d^{-1}
\\ &= -\|\xi\|_1\log d
\end{align*}
\noindent for every $\tau \in \mathcal{N}$ for which $\sum_{k = 1}^{d} e^{i(\tau_k - \tau_{k - 1})} \neq 0$ with equality occurring precisely when $\tau = \mathbf{0}_d.$ Also, since plugging in $\tau = \mathbf{0}_d$ gives $p_{1, d}(1, \ldots, 1) \neq 0,$ the phase function $\varphi$ is analytic on every compact neighborhood of $\mathbf{0}_d$ contained in $\mathcal{N}$ for which $\sum_{k = 1}^{d} e^{i(\tau_k - \tau_{k - 1})} \neq 0.$ (Two things are worth pointing out here. First, $\mathcal{N},$ as the inverse image of a compact set under a continuous map, is itself compact, and since $p_{1, d}$ is continuous, such nonempty compact neighborhoods exist. Second, the set of points for which $p_{1, d}$ equal 0 is contained in the zero set of $A(\tau)$.) Provided that $H_{\varphi}(\mathbf{0}_d)$ is invertible, we can apply Corollary \ref{continuousamplitude} to
\begin{equation}
w_{(n, \lambda\xi)} = \frac{d^{\lambda\|\xi\|_1 + 1}}{(2\pi)^d}\int\limits_{\mathcal{N}} A(\tau)e^{-\lambda(\varphi(\tau) + \|\xi\|_1\!\log d)}\,\mathrm{d}\tau.
\end{equation}

All that remains is to calculate $H_{\varphi}.$ We find that, for $1 \leq j \leq d - 1,$
\[\varphi_{\tau_j} = \frac{-|\xi^+|(ie^{i(\tau_j - \tau_{j - 1})} - ie^{i(\tau_{j + 1} - \tau_j)})}{E(\tau)} - \frac{|\xi^-|(ie^{i(\tau_j - \tau_{j + 1})} - ie^{i(\tau_{j - 1} - \tau_j)})}{\overline{E(\tau)}} + i(\xi_j - \xi_{j + 1}),\]
\noindent and so for all $j, k$ with $1 \leq j \leq k \leq d - 1$ and $k - j > 1$
\begin{align*}
\varphi_{\tau_j\tau_j} &= -|\xi^+|\frac{(-e^{i(\tau_j - \tau_{j - 1})} - e^{i(\tau_{j + 1} - \tau_j)})E(\tau) - (ie^{i(\tau_j -\tau_{j - 1})} - ie^{i(\tau_{j + 1} - \tau_j)})^2}{[E(\tau)]^2}
\\ & \hspace{1.5em} -|\xi^-|\frac{(-e^{i(\tau_j - \tau_{j + 1})} - e^{i(\tau_{j - 1} - \tau_j)})\overline{E(\tau)} - (ie^{i(\tau_j - \tau_{j + 1})} - ie^{i(\tau_{j - 1} - \tau_j)})^2}{\left[\overline{E(\tau)}\right]^2}
\\ \varphi_{\tau_{j + 1}\tau_j} &= -|\xi^+|\frac{e^{i(\tau_{j + 1} - \tau_j)}E(\tau) - (ie^{i(\tau_j -\tau_{j - 1})} - ie^{i(\tau_{j + 1} - \tau_j)})(ie^{i(\tau_{j + 1} - \tau_j)} - ie^{i(\tau_{j + 2} - \tau_{j + 1})})}{[E(\tau)]^2}
\\ & \hspace{1.5em} -|\xi^-|\frac{e^{i(\tau_j - \tau_{j + 1})}\overline{E(\tau)} - (ie^{i(\tau_j - \tau_{j + 1})} - ie^{i(\tau_{j - 1} - \tau_j)})(ie^{i(\tau_{j + 1} - \tau_{j + 2})} - ie^{i(\tau_j - \tau_{j + 1})})}{\left[\overline{E(\tau)}\right]^2}
\\ \varphi_{\tau_k\tau_j} &= |\xi^+|\frac{(ie^{i(\tau_j - \tau_{j - 1})} - ie^{i(\tau_{j + 1} - \tau_j)})(ie^{i(\tau_k - \tau_{k - 1})} - ie^{i(\tau_{k + 1} - \tau_k)})}{[E(\tau)]^2}
\\ & \hspace{1.5em} +|\xi^-|\frac{(ie^{i(\tau_j - \tau_{j + 1})} - ie^{i(\tau_{j - 1} - \tau_j)})(ie^{i(\tau_k - \tau_{k + 1})} - ie^{i(\tau_{k - 1} - \tau_k)})}{\left[\overline{E(\tau)}\right]^2}.
\end{align*}
\noindent The remaining mixed partial derivatives follow from Clairaut's theorem. Thus
\begin{align*}
\varphi_{\tau_j\tau_j}(\mathbf{0}_d) &= -|\xi^+|\frac{-2d}{d^2} - |\xi^-|\frac{-2d}{d^2} = \frac{2\|\xi\|_1}{d}
\\ \varphi_{\tau_j\tau_{j + 1}}(\mathbf{0}_d) &= \varphi_{\tau_{j + 1}\tau_j}(\mathbf{0}_d) = -|\xi^+|\frac{d}{d^2} - |\xi^-|\frac{d}{d^2} = -\frac{\|\xi\|_1}{d}
\\ \varphi_{\tau_j\tau_k}(\mathbf{0}_d) &= \varphi_{\tau_k\tau_j}(\mathbf{0}_d) = 0,
\end{align*}
\noindent and so
\[H_{\varphi}(\mathbf{0}_d) = -\frac{\|\xi\|_1}{d}\left[\begin{matrix} -2 & 1 & 0 & \cdots & 0 \\ 1 & -2 & 1 & \ddots & 0 \\ 0 & 1 & \ddots & \ddots & 0 \\ 0 & 0 & \ddots & \ddots & 1 \\ 0 & 0 & \cdots & 1 & -2 \end{matrix}\right],\]
\noindent which is a $d \times d$ symmetric tridiagonal Toeplitz matrix. Using formula (6) from \cite{HO}
\begin{align*}
\det(H_{\varphi}(\mathbf{0}_d)) &= \left(-\frac{\|\xi\|_1}{d}\right)^d\lim_{D \to -2^-} \frac{(-1)^d\sinh\!\left((d + 1)\cosh^{-1}(-D/2)\right)}{\sinh\!\left(\cosh^{-1}(-D/2)\right)}
\\ &= \left(\frac{\|\xi\|_1}{d}\right)^d(d + 1)
\end{align*}
\noindent By Proposition 2.1 of \cite{KST}, the eigenvalues of $H_{\varphi}(\mathbf{0}_d)$ are
\[-\frac{\|\xi\|_1}{d}\left(-2 - 2\cos\frac{k\pi}{d + 1}\right) = \frac{4\|\xi\|_1}{d}\cos^2\!\left(\frac{k\pi}{2(d + 1)}\right), \hspace{1em} \text{for}\ k = 1, 2, \ldots, d,\]
\noindent and so the product of the principal square roots of the eigenvalues of $H_{\varphi}(\mathbf{0}_d)$ is a nonnegative real number. We thus take the positive square root of $H_{\varphi}(\mathbf{0}_d)$ and apply Corollary \ref{continuousamplitude} to get that
\begin{equation}
w_{(n, \lambda\xi)} = \frac{d^{\lambda\|\xi\|_1 + 1}}{(2\pi)^d}\cdot2\pi d^{2n}\frac{(2\pi\lambda)^{-d/2}}{\sqrt{(\|\xi\|_1/d)^d(d + 1)}}(1 + o(1))
\end{equation}
\noindent as $\lambda \to \infty.$
\end{proof}

\subsection{The Association Between $w_{(n, \xi)}$ and Modified Bessel Functions of the First Kind}

The cardinalities $w_{(n, \xi)}$ can be linked to products of modified Bessel functions of the first kind, which are functions given by the power series
\begin{equation}
I_{\nu}(z) = \sum_{n = 0}^{\infty} \frac{1}{n!\Gamma(n + \nu + 1)}\left(\frac{z}{2}\right)^{2n + \nu}, \hspace{1em} \text{for}\ \nu \in \C - \Z_{< 0},
\end{equation}
\noindent where $\Gamma$ is the eponymous gamma function. It is customary to keep to the principal branch of $(z/2)^{\nu}$ so that $I_{\nu}$ is analytic on $\C - (-\infty, 0]$ and two-valued and discontinuous on the cut $\text{Arg}\,z = \pm\pi$ (if $\nu \not\in \N_0$). To facilitate the asymptotic analysis of $w_{(n, m\mathbf{1}_d)},$ we will instead work with the normalized Bessel function
\begin{equation}
\tilde{I}_{\nu}(z) = \sum_{n = 0}^{\infty} \frac{\Gamma(\nu + 1)}{n!\Gamma(n + \nu + 1)}\left(\frac{z}{2}\right)^{2n}, \hspace{1em} \text{for}\ \nu \in \C - \Z_{< 0},
\end{equation}
\noindent which is entire on $\C.$ Next, observe that
\begin{equation}
w_{(n, \xi)} = (n + |\xi^+|)!(n + |\xi^-|)![x^n]\prod_{j = 1}^{d} \left(\frac{\tilde{I}_{|\xi_j|}\!\left(2\sqrt{x}\right)}{|\xi_j|!}\right).
\end{equation}
\noindent In particular,
\begin{equation}
w_{(n, m\mathbf{1}_d)} = \frac{n!(n + d|m|)!}{(|m|!)^d}[x^n]\left(\tilde{I}_{|m|}(2\sqrt{x})\right)^d.
\end{equation}
\noindent Bender, Brody, and Meister \cite{BBM} proved that
\begin{equation} \label{besselpower}
\left(\tilde{I}_{\nu}(z)\right)^d = \sum_{n = 0}^{\infty} \frac{\Gamma(\nu + 1)}{n!\Gamma(n + \nu + 1)}B^{(\nu)}_n(d)\left(\frac{z}{2}\right)^{2n},
\end{equation}
\noindent where $B^{(\nu)}_n(d)$ is a polynomial defined recursively by
\begin{equation}
B^{(\nu)}_n(d) = d\frac{\nu + n}{\nu + 1}B^{(\nu)}_{n - 1}(d) + \sum_{j = 1}^{n} \frac{b_j(\nu)}{n}\frac{\Gamma(\nu + 2)}{\Gamma(j + \nu + 2)}\binom{\nu + n}{j}B^{(\nu)}_{n - 1}(d).
\end{equation}
\noindent with initial condition $b^{(\nu)}_0(d) = 1.$ The sequence $\{b_n(\nu)\}_{n = 1}^{\infty}$ is identified by the generating function
\begin{equation}
\sum_{n = 1}^{\infty} \frac{b_n(\nu)}{(n - 1)!\Gamma(n + \nu + 1)}x^n = \frac{x}{\Gamma(\nu + 2)}\left(\frac{\sqrt{x}}{\nu + 1}\frac{I_{\nu}(2\sqrt{x})}{I_{\nu + 1}(2\sqrt{x})} - 2\right).
\end{equation}
\noindent However, Moll and Vignat \cite{MV} found an alternative characterization of $B^{(\nu)}_n(d)$ in terms of the exponent $d.$ To start, they exploit the Hadamard factorization
\begin{equation}
\tilde{I}_{\nu}(z) = \prod_{k = 1}^{\infty} \left(1 + \frac{z^2}{j_{\nu, k}^2}\right),
\end{equation}
\noindent where $\{j_{\nu, k}\}_{k = 1}^{\infty}$ is an enumeration of the zeros of $\tilde{I}_{\nu}(iz),$ from which it follows that
\begin{equation} \label{logbessel}
\log\tilde{I}_{\nu}(z) = \sum_{n = 1}^{\infty} \frac{(-1)^{n + 1}}{n}\zeta_n(2n)z^{2n},
\end{equation}
\noindent where $\zeta_{\nu}$ is the Bessel zeta function
\begin{equation}
\zeta_{\nu}(p) = \sum_{k = 1}^{\infty} \frac{1}{j_{\nu, k}^p}, \hspace{1em} \text{for}\ p > 1.
\end{equation}
\noindent They then cite the fact that the exponential of a power series is computed via
\begin{equation} \label{seriesexp}
\exp\left[\sum_{n = 1}^{\infty} a_n\frac{z^n}{n!}\right] = \sum_{n = 0}^{\infty} \mathbf{B}_n(a_1, \ldots, a_n)\frac{z^n}{n!},
\end{equation}
\noindent where $\mathbf{B}_n(a_1, \ldots, a_n)$ is the $n^{\text{th}}$ complete Bell polynomial. (Read \cite{Riordan}, section 5.2, for details.) Multiplying (\ref{logbessel}) by $d$ and then plugging it into (\ref{seriesexp}) leads to the following theorem.

\begin{theorem}[Moll, Vignat]
Define
\begin{equation}
a_n = a^{(\nu)}_n(d) = (-1)^{n - 1}(n - 1)!\zeta_{\nu}(2n)d.
\end{equation}
\noindent Then $B^{(\nu)}_n(d)$ is given by 
\begin{equation} \label{bellrecurrence}
B^{(\nu)}_n(d) = 4^n\frac{\Gamma(n + \nu + 1)}{\Gamma(\nu + 1)}\mathbf{B}_n(a^{(\nu)}_1(d), \ldots, a^{(\nu)}_n(d)).
\end{equation}
\end{theorem}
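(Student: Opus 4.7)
The plan is to compute $\bigl(\tilde{I}_\nu(z)\bigr)^d$ in two ways using the three ingredients assembled in the preceding paragraph, and then equate coefficients of $z^{2n}.$ Starting from the elementary identity $\bigl(\tilde{I}_\nu(z)\bigr)^d = \exp\bigl(d\log\tilde{I}_\nu(z)\bigr),$ I would multiply (\ref{logbessel}) through by $d$ to obtain
\[d\log\tilde{I}_\nu(z) = \sum_{n=1}^{\infty} \frac{(-1)^{n+1}d}{n}\zeta_\nu(2n)\,z^{2n}.\]
After the formal substitution $w = z^2,$ the coefficient of $w^n/n!$ in the resulting series is exactly $(-1)^{n-1}(n-1)!\,\zeta_\nu(2n)\,d,$ which is $a_n^{(\nu)}(d)$ as defined in the statement. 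Thus, with no further manipulation, $d\log\tilde{I}_\nu(z)$ is already presented in the form required by the Bell-polynomial expansion (\ref{seriesexp}).

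Next I would invoke (\ref{seriesexp}) to obtain
\[\bigl(\tilde{I}_\nu(z)\bigr)^d = \sum_{n=0}^{\infty} \mathbf{B}_n\bigl(a_1^{(\nu)}(d),\ldots,a_n^{(\nu)}(d)\bigr)\frac{z^{2n}}{n!}.\]
Comparing this expression with (\ref{besselpower}), namely
\[\bigl(\tilde{I}_\nu(z)\bigr)^d = \sum_{n=0}^{\infty} \frac{\Gamma(\nu+1)}{n!\,\Gamma(n+\nu+1)\,4^{n}}\,B_n^{(\nu)}(d)\,z^{2n},\]
and matching coefficients of $z^{2n}$ yields the identity
\[\frac{\Gamma(\nu+1)}{n!\,\Gamma(n+\nu+1)\,4^{n}}\,B_n^{(\nu)}(d) = \frac{1}{n!}\mathbf{B}_n\bigl(a_1^{(\nu)}(d),\ldots,a_n^{(\nu)}(d)\bigr),\]
and solving for $B_n^{(\nu)}(d)$ produces precisely (\ref{bellrecurrence}).

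There is no genuine analytic obstacle: since $\tilde{I}_\nu$ is entire and $\tilde{I}_\nu(0)=1,$ the composition $d\log\tilde{I}_\nu(z)$ is a convergent power series in a neighborhood of the origin, so term-by-term identification of the two series for $\bigl(\tilde{I}_\nu(z)\bigr)^d$ is legitimate. The only bookkeeping point worth flagging is the factor $(z/2)^{2n} = z^{2n}/4^{n}$ in (\ref{besselpower}), which is exactly what produces the $4^{n}$ prefactor in the final formula; confirming the alignment of sign conventions $(-1)^{n+1}=(-1)^{n-1}$ between (\ref{logbessel}) and the statement of the theorem is the last piece of checking needed.
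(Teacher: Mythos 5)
Your proof is correct and fills in exactly the computation the paper sketches in one sentence (``Multiplying (\ref{logbessel}) by $d$ and then plugging it into (\ref{seriesexp}) leads to the following theorem''). The substitution $w = z^2$, the identification of $a_n^{(\nu)}(d)$ as the $w^n/n!$-coefficient of $d\log\tilde{I}_\nu$, and the final matching of coefficients against (\ref{besselpower}) with the $4^n$ coming from $(z/2)^{2n}$ are all exactly right.
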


Substitute (\ref{bellrecurrence}) into (\ref{besselpower}) to get
\begin{equation}
w_{(n, m\mathbf{1}_d)} = \frac{4^n(n + d|m|)!}{(|m|!)^d}\mathbf{B}_n(a^{|m|}_1(d), \ldots, a^{|m|}_n(d)).
\end{equation}
\noindent We are now poised to derive the following asymptotic formula.

\begin{theorem}
\begin{equation}
w_{(n, m\mathbf{1}_d)} = \frac{\sqrt{2\pi}(|m|d)^{|m|d + 1/2}}{(|m|!e^{|m|})^d}\left(\frac{|m|d^2}{|m| + 1}\right)^n + O(d^{|m|d + 2n - 1/2})
\end{equation}
\noindent as $d \to \infty$ with $n$ and $m \neq 0$ fixed. If $m = 0,$ then
\begin{equation}
w_{(n, \mathbf{0}_d)} = n!d^n + O(d^{n - 1}).
\end{equation}
\end{theorem}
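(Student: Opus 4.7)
The plan is to start from the Bell-polynomial identity
\[
w_{(n, m\mathbf{1}_d)} = \frac{4^n(n + d|m|)!}{(|m|!)^d}\,\mathbf{B}_n\!\left(a^{(|m|)}_1(d), \ldots, a^{(|m|)}_n(d)\right),
\]
which was just established via Moll and Vignat's theorem, and extract the $d \to \infty$ asymptotics by (i) isolating the top-degree-in-$d$ term of the Bell polynomial, (ii) evaluating the coefficient via Rayleigh's sum formula for the Bessel zeta function, and (iii) applying Stirling to the factorial $(n + d|m|)!$.

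Since $a^{(\nu)}_k(d) = (-1)^{k-1}(k-1)!\,\zeta_\nu(2k)\,d$ is linear in $d$, the complete Bell polynomial is a polynomial in $d$ of degree $n$. Its degree-$n$ term comes only from the multi-index $(j_1,\ldots,j_n) = (n,0,\ldots,0)$ in the standard expansion
\[
\mathbf{B}_n(a_1,\ldots,a_n) = \sum_{\substack{j_1,\ldots,j_n \geq 0 \\ j_1 + 2j_2 + \cdots + nj_n = n}} \frac{n!}{\prod_k k!^{j_k}\,j_k!}\prod_{k=1}^{n} a_k^{j_k},
\]
which contributes $a_1^n$; every other multi-index contains some $a_k$ with $k \geq 2$ and therefore loses at least one power of $d$. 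Hence $\mathbf{B}_n(a^{(|m|)}_1(d),\ldots,a^{(|m|)}_n(d)) = \zeta_{|m|}(2)^n d^n + O(d^{n-1})$ as $d \to \infty$ with $n$ and $m$ fixed. Now Rayleigh's classical identity $\zeta_\nu(2) = 1/(4(\nu+1))$ evaluates the leading coefficient as $[4(|m|+1)]^{-n}$, so
\[
\mathbf{B}_n(a^{(|m|)}_1(d), \ldots, a^{(|m|)}_n(d)) = \frac{d^n}{[4(|m|+1)]^n} + O(d^{n-1}).
\]

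For $m \neq 0$, substituting this into the Bell-polynomial formula and invoking Stirling in the form
\[
(n + d|m|)! = \sqrt{2\pi d|m|}\,(d|m|)^{d|m| + n}\,e^{-d|m|}\,(1 + O(d^{-1})),
\]
which follows from $(1 + n/(d|m|))^{n + d|m|} = e^n(1 + O(d^{-1}))$, gives the claimed
\[
w_{(n,m\mathbf{1}_d)} = \frac{\sqrt{2\pi}(|m|d)^{|m|d+1/2}}{(|m|!\,e^{|m|})^d}\!\left(\frac{|m|d^2}{|m|+1}\right)^{\!n}\! + O(d^{|m|d + 2n - 1/2}),
\]
after absorbing the factor $4^n$, the $d^n$ from the Bell polynomial, and the $(d|m|)^n$ from Stirling into $(|m|d^2/(|m|+1))^n$. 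The $m = 0$ case is even simpler because no Stirling approximation is needed: here $w_{(n,\mathbf{0}_d)} = B_n^{(0)}(d) = 4^n n!\,\mathbf{B}_n(a^{(0)}_1(d),\ldots,a^{(0)}_n(d))$, and the same leading-term analysis with $\zeta_0(2) = 1/4$ produces $w_{(n,\mathbf{0}_d)} = n!\,d^n + O(d^{n-1})$.

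The only genuinely delicate point is verifying that the error from dropping the non-leading terms of $\mathbf{B}_n$ is indeed $O(d^{n-1})$ uniformly in the prefactor; but since $\mathbf{B}_n$ is a \emph{fixed} polynomial in $d$ (of degree $n$, with $n$ held fixed) and all its subleading monomials involve $a_k$ with $k \geq 2$, this is a purely formal degree count. The slight subtlety in the $m \neq 0$ case is that one must track the Stirling error against the sizes $(|m|d)^{|m|d}$ and $(|m|d)^n$ to confirm that the lower-order Bell-polynomial contribution is swallowed into $O(d^{|m|d + 2n - 1/2})$; this follows because replacing $d^n$ by $d^{n-1}$ in the leading expression removes exactly one factor of $d$ while leaving the exponential factor $(|m|d)^{|m|d + 1/2}/(|m|!\,e^{|m|})^d$ intact.
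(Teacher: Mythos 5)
Your argument is correct and follows the same overall strategy as the paper: isolate the leading-in-$d$ term of $\mathbf{B}_n(a^{(|m|)}_1(d), \ldots, a^{(|m|)}_n(d))$, evaluate the resulting coefficient $\zeta_{|m|}(2)^n$ via Rayleigh's identity $\zeta_\nu(2) = 1/(4(\nu+1))$, and then apply Stirling to $(n + d|m|)!$. The only variation is the representation of the complete Bell polynomial used for the degree count: you use the partition (Fa\`a di Bruno) expansion, where the constraint $j_1 + 2j_2 + \cdots + nj_n = n$ forces the unique maximizer $j = (n,0,\ldots,0)$ of $j_1 + \cdots + j_n$, whereas the paper reads off the same conclusion from the lower-Hessenberg determinantal formula for $\mathbf{B}_n$, noting that the identity permutation is the only one achieving full degree $n$ in $d$; both yield the leading coefficient $a_1^n$ and $O(d^{n-1})$ remainder, and neither is appreciably harder, so this amounts to a cosmetic rather than a structural difference.
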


\begin{proof}
Since $\mathbf{B}_n(a^{|m|}_1(d), \ldots, a^{|m|}_n(d))$ is a polynomial in $d,$ it suffices to find its degree and leading coefficient. This is achieved by invoking the Bell polynomial's determinantal representation
\begin{equation} \label{determinant}
\mathbf{B}_n(a_1, \ldots, a_n) = \text{det}\left(\begin{matrix} a_1 & -1 & 0 & 0 & \cdots & 0 \\ a_2 & a_1 & -1 & 0 & \cdots & 0 \\ a_3 & \binom{2}{1}a_2 & a_1 & -1 & \ddots & 0 \\ \vdots & \vdots & \vdots & \ddots & \ddots & \vdots \\ a_{n - 1} & \binom{n - 2}{1}a_{n - 2} & \binom{n - 2}{2}a_{n - 3} & \binom{n - 2}{3}a_{n - 4} &\ddots & -1 \\ a_n & \binom{n - 1}{1}a_{n - 1} & \binom{n - 1}{2}a_{n - 2} & \binom{n - 1}{3}a_{n - 3} & \cdots & a_1 \end{matrix}\right).
\end{equation}
\noindent (This determinantal representation is explained in \cite{Collins}, albeit with a typographical error.) Each $a_i$ includes exactly one factor of $d,$ and so, because the above matrix is lower triangular save for the superdiagonal comprised entirely of $-1$s, $\mathbf{B}_n(a^{|m|}_1(d), \ldots, a^{|m|}_n(d))$ is an $n^{\text{th}}$ degree polynomial in $d$ with leading coefficient given by the product of the diagonal entries of (\ref{determinant}), which is
\begin{equation}
\left(a^{|m|}_1(d)\right)^n = (d\zeta_{|m|}(2))^n = \left(\frac{d}{4(|m| + 1)}\right)^n.
\end{equation}
\noindent (Here we have used the fact that $\zeta_{\nu}(2) = \frac{1}{4(\nu + 1)}$ which is elaborated on page 502 of \cite{Watson}.) Hence
\begin{equation} \label{lastasymptotic}
w_{(n, m\mathbf{1}_d)} = \frac{4^n(n + d|m|)!}{(|m|!)^d}\left(\left(\frac{d}{4(|m| + 1)}\right)^n + O(d^{n - 1})\right).
\end{equation}
\noindent Applying Stirling's approximation on $(n + d|m|)!$ completes the proof.
\end{proof}

Formula (\ref{lastasymptotic}) suggests that $w_{(n, m\mathbf{1}_d)}$ is roughly equal to
\[\binom{n + d|m|}{|m|, \ldots, |m|, n}\left(\frac{d}{|m| + 1}\right)^nn!\]
\noindent when $d$ is substantially large. In the case that $m = 0,$ we get $d^nn!,$ which is the number of ways to construct a length $n$ string $w$ and a permutation $w'$ of $w,$ provided that the characters of $w$ are always mutually distinct. This, of course, is untrue, and so $d^nn!$ is a bit of an overestimate for the number of abelian squares of length $2n.$ However, if one chooses a character from $\Sigma_d$ uniform randomly and independently for each letter of $w,$ then the probability that $w$ has no repeated characters is
\[\frac{d(d - 1)\cdots(d - n + 1)}{d^n} = 1 - O\!\left(\frac{1}{d^{n - 1}}\right).\]
\noindent The characters of $w$ are therefore asymptotically almost surely different, and so $d^nn!$ is a satisfactory estimate for $w_{(n, \mathbf{0}_d)}$ when $d$ is very large relative to $n.$

\section{Discussion and Future Work}
In this chapter we briefly discuss three possible avenues of future investigation.

\subsection{Fourier Analysis of Generating Functions}
We begin by applying the Cauchy-Hadamard formula to $W_{\xi},$ for $\xi \in \Z^d,$ to see that $W_{\xi}$ has a radius of convergence of
\begin{equation}
\frac{1}{\limsup\limits_{n \to \infty} |w_{(n, \xi)}|^{1/n}} = \frac{1}{\limsup\limits_{n \to \infty}\, (d^{2n + d/2 + \|\xi\|_1}(4\pi n)^{(1 - d)/2})^{1/n}} = \frac{1}{d^2}.
\end{equation}
\noindent This coupled with Plancharel's theorem from harmonic analysis leads us to conclude the following.

\begin{theorem} \label{spdunique}
The spectral density function $\mathcal{S}_{p_{1, d}^{\star}[x]},$ with $|x| < 1/d^2,$ is, up to sets of measure zero, the only operator $f : \R \rightarrow L^2(\T^d)$ that satisfies
\[x^{\frac{1}{2}\|\xi\|_1}W_{\xi}(x) = \widehat{f(x)}(\xi)\]
\noindent for all $x \in (-1/d^2, 1/d^2).$
\end{theorem}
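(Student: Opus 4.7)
The plan is to reduce the statement to the standard fact that an element of $L^2(\T^d)$ is determined by its Fourier coefficients up to a set of measure zero. By Theorem \ref{ogf}, the spectral density function itself already satisfies the prescribed identity $\widehat{\mathcal{S}_{p[\sqrt{x}]^*_{1,d}}}(\xi) = x^{\|\xi\|_1/2}W_\xi(x),$ so the substance of Theorem \ref{spdunique} breaks into two pieces: (i) verify that $\mathcal{S}_{p[\sqrt{x}]^*_{1,d}}(\cdot) \in L^2(\T^d)$ for every $x$ in the prescribed range, and (ii) verify that the sequence $\{x^{\|\xi\|_1/2}W_\xi(x)\}_{\xi \in \Z^d}$ is itself square summable. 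Once both are in hand, any candidate $f(\cdot;x) \in L^2(\T^d)$ sharing every Fourier coefficient with $\mathcal{S}_{p[\sqrt{x}]^*_{1,d}}(\cdot)$ must coincide with it almost everywhere by the Riesz--Fischer theorem, and the uniqueness conclusion is immediate.

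For (i), I would note that $|x| < 1/d^2$ is equivalent to $|\sqrt{x}| < 1/d,$ which is exactly the condition under which $p[\sqrt{x}]^*_{1,d}$ is strongly $\D^d$-stable. Hence $|p[\sqrt{x}]^*_{1,d}|$ is bounded away from zero on the compact torus $\T^d,$ so $\mathcal{S}_{p[\sqrt{x}]^*_{1,d}}|_{\T^d} = |p[\sqrt{x}]^*_{1,d}|^{-2}$ is continuous and bounded, and in particular lies in $L^2(\T^d).$

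For (ii), the fixed-$\xi$ asymptotic of Theorem 5.1 is by itself insufficient and must be replaced by a uniform-in-$\xi$ bound. Applying $\binom{|\nu+\xi^{\pm}|}{\nu+\xi^{\pm}} \leq d^{|\nu+\xi^{\pm}|}$ termwise in (\ref{enumeration}) yields $w_{(n,\xi)} \leq d^{\|\xi\|_1}\binom{n+d-1}{d-1}d^{2n},$ whence
\[|W_\xi(x)| \leq \frac{d^{\|\xi\|_1}}{(1-d^2|x|)^d} \quad \text{uniformly for } \xi \in \Z^d \text{ and } |x| < 1/d^2.\]
Consequently
\[\sum_{\xi\in\Z^d}\left|x^{\|\xi\|_1/2}W_\xi(x)\right|^2 \leq \frac{1}{(1-d^2|x|)^{2d}}\sum_{\xi\in\Z^d}(d^2|x|)^{\|\xi\|_1} = \frac{1}{(1-d^2|x|)^{2d}}\left(\frac{1+d^2|x|}{1-d^2|x|}\right)^d,\]
which is finite. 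Combining (i) and (ii), for each fixed $x \in (-1/d^2,1/d^2)$ the two elements $\mathcal{S}_{p[\sqrt{x}]^*_{1,d}}(\cdot)$ and $f(\cdot;x)$ of $L^2(\T^d)$ have identical Fourier expansions, hence agree almost everywhere on $\T^d.$

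The only step requiring genuine attention is the uniform-in-$\xi$ estimate above; everything else is a one-line appeal to the standard Fourier uniqueness theorem on the compact abelian group $\T^d.$ The radius-of-convergence computation already carried out at the start of Section 6.1 identifies $1/d^2$ as the critical value, and the geometric majorization given here is its natural promotion into a bound on $\ell^2(\Z^d).$
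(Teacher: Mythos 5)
Your proposal is correct and shares its backbone with the paper's proof: strong $\D^d$-stability of $p[\sqrt{x}]_{1,d}^*$ for $|x| < 1/d^2$ puts $\mathcal{S}_{p[\sqrt{x}]_{1,d}^*}$ in $L^2(\T^d),$ Theorem \ref{ogf} identifies its Fourier coefficients as $x^{\|\xi\|_1/2}W_\xi(x),$ and the uniqueness of an $L^2(\T^d)$ element with prescribed Fourier coefficients finishes the job. Where you diverge is in step (ii): the paper obtains square-summability of $\{x^{\|\xi\|_1/2}W_\xi(x)\}_\xi$ as an immediate consequence of step (i) via Parseval's equation (so strictly speaking this step, like your (ii), is redundant once $L^2$ membership of $\mathcal{S}$ is in hand), whereas you prove it from scratch with the explicit combinatorial majorization $w_{(n,\xi)} \leq \binom{n+d-1}{d-1}d^{2n+\|\xi\|_1}.$ That bound is correct: each multinomial coefficient $\binom{|\nu+\xi^{\pm}|}{\nu+\xi^{\pm}}$ is at most $d^{|\nu+\xi^{\pm}|} = d^{n+|\xi^{\pm}|}$ by the multinomial theorem, and there are $\binom{n+d-1}{d-1}$ weak compositions $\nu$ of $n.$ The payoff of your more hands-on route, which the paper does not pursue, is that it establishes the $1/d^2$ radius of convergence of $W_\xi$ uniformly in $\xi,$ not just pointwise for each fixed $\xi$ as the Cauchy-Hadamard argument at the top of Section 6.1 does (that argument relies on the $n\to\infty$ asymptotic of Theorem 5.1, which holds for each fixed $\xi$ but carries no uniformity). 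So while the Parseval shortcut is slicker, your geometric majorization provides an independent, elementary certificate of convergence that also tightens the logical gap in the paper's preliminary radius-of-convergence remark.
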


Theorems \ref{fourierinversion} and \ref{spdunique} hint towards ``generatingfunctionological'' analogues of several classical results from harmonic analysis. We plan to explore these ideas further in a sequel paper.

\subsection{Stabilized Symmetric Polynomials}

Because the ring of symmetric polynomials with rational coefficients equals the rational polynomial ring generated by the power sum symmetric polynomials, one can then ``stabilize'' any rational symmetric polynomial. How, if at all, does this translate into operations on free objects over a finite set in general?

\subsection{A Unified Framework for ``Offsetness''}

At heart, the extent of a word pair's ``offsetness'' is the magnitude of the difference between the component words' ``frequencies,'' which are given by their Parikh vectors. Fundamentally, this is what suggests Fourier analytic treatment. Unitarity of the Fourier transform assures a one-to-one correspondence between the component Parikh vectors of a word pair and the offsetness of their concatenation.

Similar constructs should therefore be permissible on any Cartesian product of an unlabelled combinatorial class with itself. Candidates for analogous notions of offsetness include concave and convex compositions, rooted trees where the root has two children, and directed multigraphs. A more thorough, unified development of this concept may therefore be worthwhile.

\section*{Acknowledgments}

This collaboration evolved from a doctoral thesis project assigned to the second author by his thesis advisor, Hugo Woerdeman. We thank Dr. Woerdeman for his guidance and feedback. We also thank Robert Boyer, Eric Schmutz, and Robin Pemantle for engaging us in worthwhile technical discussions about certain aspects of this paper. The first author also thanks the conference organizers of the 2016 MAAGC workshop and Permutation Patterns 2016 for accepting our abstracts for poster sessions while this project was very much still a work in progress. We also thank the participants of those conferences who were interested in our poster. Their questions helped us decide how to motivate this subject.

\bibliographystyle{plain}

\begin{thebibliography}{99}

\bibitem{AKR}
{Andrews, George E., Kolitsch, Louis E., and Ruehr, O.G.},
{Comment on problem 87-2},
{\sl SIAM Review},
{\bf 30}, (1988), {no. 1}, {130}.

\bibitem{BW}
{Bakonyi, Mih{\'a}ly and Woerdeman, Hugo J.},
{Matrix completions, moments, and sums of Hermitian squares},
{Princeton University Press, Princeton, NJ},
(2011), ISBN {978-0-691-12889-4}.

\bibitem{BBM}
{Bender, Carl M. and Brody, Dorje C. and Meister, Bernhard K.},
{On powers of {B}essel functions},
{\sl J. Math. Phys.},
{\bf 44}, (2003), {no. 1}, {309--314},

\bibitem{BNSW}
{Borwein, Jonathan M. and Nuyens, Dirk and Straub, Armin and Wan, James},
{Some arithmetic properties of short random walk integrals},
{\sl Ramanujan J.},
{\bf 26}, (2011), {no. 1}, {109--132}.

\bibitem{CR}
{Cambanis, Stamatis and Richards, Donald},
{Problem 87-2: a multinomial summation},
{\sl SIAM Review},
{\bf 29}, (1987), {no. 1}, {121--122}.

\bibitem{Ciaoba}
{Cioab\u{a}, Sebastian M.}
{Closed walks and eigenvalues of abelian {C}ayley graphs},
{\sl C. R. Math. Acad. Sci. Paris},
{\bf 342}, (2006), {no. 9}, {635--638}.

\bibitem{Collins}
{Collins, C. B.},
{The role of Bell polynomials in integration},
{\sl J. Comput. Appl. Math.},
{\bf 131}, (2001), {no. 1--2}, {195--222}.

\bibitem{Daubechies}
{Daubechies, Ingrid},
{Ten lectures on wavelets},
{CBMS-NSF Regional Conference Series in Applied Mathematics},
{\bf 61}, {Society for Industrial and Applied Mathematics (SIAM), Philadelphia, PA},
(1992), ISBN {978-0-89871-274-2}.

\bibitem{Erdos}
{Erd{\H{o}}s, Paul},
{Some unsolved problems},
{\sl Magyar Tud. Akad. Mat. Kutat\'o Int. K\"ozl.},
{\bf 6}, (1961), {221--254}.

\bibitem{FS}
{Flajolet, Philippe and Sedgewick, Robert},
{Analytic combinatorics},
{Cambridge University Press, Cambridge},
(2009), ISBN {978-0-521-89806-5}.

\bibitem{FF}
{Foias, Ciprian and Frazho, Arthur E.},
{The commutant lifting approach to interpolation problems},
{Operator Theory: Advances and Applications},
{\bf 44}, {Birkh\"auser Verlag, Basel},
(1990), ISBN {978-3-7643-2461-9}.

\bibitem{FFGK}
{Foias, C. and Frazho, A. E. and Gohberg, I. and Kaashoek, M. A.},
{Metric constrained interpolation, commutant lifting and systems},
{Operator Theory: Advances and Applications},
{\bf 100}, {Birkh\"auser Verlag, Basel},
(1998), ISBN {978-3-7643-5889-0}.

\bibitem{GW}
{Geronimo, Jeffrey S. and Woerdeman, Hugo J.},
{Positive extensions, Fej\'{e}r-Riesz factorization and autoregressive filters in two variables},
{\sl Ann. of Math. (2)},
{\bf 160}, (2004), {no. 3}, {839--906}.

\bibitem{Gohberg}
{Gohberg, I. (ed.)},
{I. Schur methods in operator theory and signal processing},
{Operator Theory: Advances and Applications},
{\bf 18}, {Birkh\"auser Verlag, Basel},
(1986), ISBN {978-0-817-61776-9}.

\bibitem{Gould}
{Gould, H. W.},
{Generalization of Hermite's divisibility theorems and the Mann-Shanks primality criterion for $s$-Fibonomial arrays},
{\sl Fibonacci Quart.},
{\bf 12}, (1974), {157--166}.

\bibitem{GJR}
{Greenhill, Catherine and Janson, Svante and Ruci{\'n}ski, Andrzej},
{On the number of perfect matchings in random lifts},
{\sl Combin. Probab. Comput.},        
{\bf 19}, (2010), {no. 5--6}, {791--817}.

\bibitem{GK-VW}
{Grinshpan, Anatolii and Kaliuzhnyi-Verbovetskyi, Dmitry S. and Woerdeman, Hugo J.},
{Norm-constrained determinantal representations of multivariable polynomials},
{\sl Complex Anal. Oper. Theory},
{\bf 3}, (2013), {635--654}.

\bibitem{HO}
{Hu, G. Y. and O'Connell, R. F.},
{Analytical inversion of symmetric tridiagonal matrices},
{\sl J. Phys. A},
{\bf 29}, (1996), {no. 7}, {1511--1513}.

\bibitem{KST}
{Kulkarni, Devadatta and Schmidt, Darrell and Tsui, Sze-Kai},
{Eigenvalues of tridiagonal pseudo-Toeplitz matrices},
{\sl Linear Algebra Appl.},
{\bf 297}, (1999), {no. 1--3}, {63--80}.

\bibitem{Macdonald}
{Macdonald, I. G.},
{Symmetric functions and Hall polynomials},
{Oxford Classic Texts in the Physical Sciences},
{2nd edition}, (2015), ISBN {978-0-19-873912-8}.

\bibitem{MV}
{Moll, Victor H. and Vignat, Christophe},
{On polynomials connected to powers of Bessel functions},
{\sl Int. J. Number Theory},
{\bf 10}, (2014), {no. 5}, {1245--1257}.

\bibitem{OEIS}
{\sl The On-Line Encyclopedia of Integer Sequences},
{available electronically at \url{oeis.org}}.

\bibitem{Parikh}
{Parikh, R. J.},
{Language generating devices},
{\sl Quarterly Progress Report No. 60},
{\sl Research Laboratory of Electronics, MIT}
(1961), {199-212}.

\bibitem{PW}
{Pemantle, Robin and Wilson, Mark C.},
{Asymptotic expansions of oscillatory integrals with complex phase},
{In: \textit{Algorithmic probability and combinatorics}},
{Contemp. Math.},
{\bf 520}, {Amer. Math. Soc., Providence, RI},
(2010), {221--240}.

\bibitem{RR}
{Richmond, L. B. and Rousseau, C.}
{Comment on problem 87-2},
{\sl SIAM Review},
{\bf 31}, (1989), {no. 1}, {122--125}.

\bibitem{RS}
{Richmond, L. B. and Shallit, Jeffrey},
{Counting abelian squares},
{\sl Electron. J. Combin.},
{\bf 16}, (2009), {no. 1}.

\bibitem{Riordan}
{Riordan, John},
{Combinatorial identities},
{John Wiley \& Sons, Inc., New York-London-Sydney},
(1968), ISBN {978-0-471-72275-5}.

\bibitem{Rudin}
{Rudin, Walter},
{Principles of mathematical analysis},
{3rd edition},
{International Series in Pure and Applied Mathematics},
{McGraw-Hill Book Co., New York-Auckland-D\"usseldorf},
(1976), ISBN {978-0-070-54235-8}.

\bibitem{Stanley}
{Stanley, Richard P.},
{Enumerative combinatorics. Vol. 2},
{Cambridge Studies in Advanced Mathematics},
{\bf 62}, {Cambridge University Press, Cambridge},
(1999), ISBN {978-0-521-56069-1}.

\bibitem{Watson}
{Watson, G. N.},
{A treatise on the theory of Bessel functions},
{Cambridge Mathematical Library},
{Reprint of the second (1944) edition},
{Cambridge University Press, Cambridge},
(1995), ISBN {978-0-521-48391-3}.

\end{thebibliography}

\end{document}